\documentclass[12pt]{article}

\usepackage{amssymb,amsmath,euscript}

\setlength{\oddsidemargin}{.1in} \setlength{\evensidemargin}{.1in}
\setlength{\textwidth}{6.1in} \setlength{\textheight}{8.2in}
\setlength{\topmargin}{-0.5in} \setlength{\footskip}{1cm}

\newtheorem{proposition}{Proposition}[section]
\newtheorem{lemma}[proposition]{Lemma}
\newtheorem{theorem}[proposition]{Theorem}
\newtheorem{definition}[proposition]{Definition}
\newtheorem{corollary}[proposition]{Corollary}
\def\a{\alpha}

\def\De{\Delta}

\def\In{\mathcal{I}}
\def\ga{\gamma}

\def\La{\Lambda}
\def\ep{\varepsilon}
\def\si{\sigma}

\def\ka{\kappa}
\def\th{\theta}
\def\o{\omega}

\def\O{\Omega}
\def\car{{\rm card}}

\def\diam{{\rm diam}}

\def\Sp{{\rm supp}}

\def\jb{\overline{J}_0}
\def\jbu{\overline{J}_1}
\def\jbut{\overline{J}_2(\tau)}

\def\li{{\lfloor}}
\def\ri{{\rfloor}}

\def\F{{\EuScript F}}
\def\I{{\rm 1 \hskip-2.9truept l}}

\newcommand{\wh}{\widehat}
\newcommand{\wt}{\widetilde}
\def\N{{\mathbb N}}
\def\N{{\mathbb N}}
\def\R{{\mathbb R}}

\def\H{\mathcal{H}}
\def\K{\mathcal{K}}
\def\cQ{\mathcal{Q}}
\def\L{\mathcal{L}}
\def\D{\mathcal{D}}

\def\S{\mathcal{S}}
\def\E{{\mathbb E}}
\def\P{{\mathbb P}}
\def\Z{{\mathbb Z}}
\def\B{{\cal B}}
\def\M{{\EuScript M}}

\def\os{{\rm Osc}}

%
%
%
%
\makeatletter \@addtoreset{equation}{section} \makeatother
%
%

%
%
%
%
%
\newcommand {\qed}%
{%
    {}\hfill
    {}\hfill
    {$\square $}%
    \vspace {0.3cm}%
    \pagebreak [2]%
    \par
}%
%
%
%
%
%
%
\newenvironment{proof}[1]{%
    \vspace{0.3cm}%
    \pagebreak [2]%
    \par%
    \noindent {\bf  Proof~#1\ }}{\qed}%
%
%
%
%
%
%
%
%
%
\newenvironment{remark}{%
    \vspace{0.3cm} \pagebreak [2]%
    \par%
    \refstepcounter{proposition}
    \noindent%
    {\bf Remark~\theproposition\  }}{\qed}%
    
 \newenvironment{remarks}{%
    \vspace{0.3cm} \pagebreak [2]%
    \par%
    \refstepcounter{proposition}
    \noindent%
    {\bf Remarks~\theproposition\  }}{\qed}%
%
%
%
%
%
%
%

\begin{document}

\title {Lower bound for local oscillations of Hermite processes}
\author{Antoine Ayache \\  
University of Lille, \\
CNRS, UMR 8524 - Laboratoire Paul Painlev\'e,\\
F-59000 Lille, France\\
E-mail: antoine.ayache@univ-lille.fr\\
 }

\maketitle

\begin{abstract} 
The most known example of a class of non-Gaussian stochastic processes which belongs to the homogenous Wiener chaos of an arbitrary order $N>1$ are probably Hermite processes of rank $N$. They generalize fractional Brownian motion (fBm) and Rosenblatt process in a natural way. They were introduced several decades ago. Yet, in contrast with fBm and many other Gaussian and stable stochastic processes and fields 
related to it, few results on path behavior of Hermite processes are available in the literature. For instance the natural issue of whether or not their paths are nowhere differentiable functions has not yet been solved even in the most simple case of the Rosenblatt process. The goal of our article is to derive a quasi-optimal lower bound of the asymptotic behavior of local oscillations of paths of Hermite processes of any rank $N$, which, among other things, shows that these paths are nowhere differentiable functions.   
\noindent 
\end{abstract}

\noindent{\small{\bf Running Title:}\ On the oscillations of Hermite processes.}

\noindent{\small{\bf Key Words.}\  Wiener chaos, self-similar process, Rosenblatt process, path behavior, \\
nowhere differentiability.}

\noindent{\small{\bf AMS Subject Classification (2010).}\ } Primary: 60G17, 60G18; secondary: 60H05.

\section{Introduction and statement of the main result}
\label{sec:intro}
Let an integer $N\ge 1$ and a real number $H\in \big (1-1/(2N),1\big)$ be arbitrary and fixed. The Hermite process of rank $N$ and parameter $H$ is denoted by $X^{H,N}=\{X^{N,H}(t)\}_{t\in\R_+}$. It is a real-valued stochastic process belonging to the homogenous Wiener chaos of order $N$ generated by a real-valued Brownian motion $B=\{B(x)\}_{x\in\R}$ on a probability space $(\O,\F,\P)$, fixed once and for all. More precisely, $\{X^{N,H}(t)\}_{t\in\R_+}$ is defined through the multiple Wiener integral:
\begin{equation}
\label{eq:def-her}
X^{N,H}(t):=\int_{\R^N}' \Big (\int_0^t \prod_{p=1}^N(s-x_p)_+^{H-3/2}\,ds\Big)\, dB(x_1)\ldots dB(x_N),\quad\mbox{ for all $t\in\R_+$\,,}
\end{equation}
with the convention that, for each $(y,\a)\in\R^2$, one has 
\begin{equation}
\label{eq:par-pos}
y_+^\a:=\left\{
\begin{array}{l}
y^\a, \mbox{if $y>0$,}\\
0, \mbox{else.}
\end{array}
\right.
\end{equation}
The symbol $\int_{\R^N}'$ in (\ref{eq:def-her}) denotes integration over $\R^N$ with diagonals $\{x_{p'}=x_{p''}\}$, $p'\ne p''$ excluded. One mentions that two classical books on Wiener chaoses, multiple Wiener integrals and related topics are \cite{janson1997gaussian,nualart}. 

Several decades ago, the well-known papers \cite{Taq75,DoMa79,Taq79} drew important connections between Hermite processes and Non-Central Limit theorem. The process $X^{1,H}$ is the very classical Gaussian fractional Brownian motion (see e.g. \cite{SamTaq,EmMa}). The process $X^{2,H}$ is the non-Gaussian Rosenblatt process which was first introduced in the pioneering article \cite{Ros61}. Since about fifteen years, there has been significantly increasing interest in the study of Hermite processes and more particularly the Rosenblatt process, we refer to the works \cite{Pipiras2004,AB2006,BaTu10,MaTu13,VeTaq13, Arr15,Boj15,BaiTaq14a,BaiTaq14b,BaiTaq15,BaiTaq17b,BaiTaq17a} to cite only a few.

It is well-known that $\{X^{N,H}(t)\}_{t\in\R_+}$ satisfies the following two fundamental properties:
\begin{itemize}
\item[(i)] It is self-similar with exponent $N(H-1)+1\in (1/2,1)$, that is, for each fixed positive real number $a$, the two processes $\big\{X^{N,H}(at)\big\}_{t\in\R_+}$ and $\big\{a^{N(H-1)+1} X^{N,H}(t)\big\}_{t\in\R_+}$ have the same finite-dimensional distributions.
\item[(ii)] It has stationary increments, which means that, for every fixed $t_0\in\R_+$, the two processes $\big\{X^{N,H}(t_0+t)-X^{N,H}(t_0)\big\}_{t\in\R_+}$ and $\big\{X^{N,H}(t)\big\}_{t\in\R_+}$ have the same finite-dimensional distributions.
\end{itemize}
Notice that these two properties imply that 
\begin{equation}
\label{eq:var-X}
\E \Big (\big |X^{N,H}(t')-X^{N,H}(t'')\big |^2\Big)=c_{N,H}|t'-t''|^{2N(H-1)+2}\,,\quad\mbox{for all $(t',t'')\in\R_+^2$,}
\end{equation}
where $c_{N,H}:=\E\big (|X^{N,H}(1)|^2\big)$. Thus using (\ref{eq:var-X}), the inequality $2N(H-1)+2>1$ and the usual Kolmogorov's continuity Theorem, it follows that $\{X^{N,H}(t)\}_{t\in\R_+}$ has a modification with continuous paths. Let us emphasize that throughout our article the process $X^{N,H}=\{X^{N,H}(t)\}_{t\in\R_+}$ is always identified with this modification of it.

Fine study of paths behavior of stochastic processes is a very classical research topic in probability and harmonic analysis whose roots go back to Wiener's works on Brownian paths in the 20's. Plenty of results on this topic have been derived in the Gaussian and stable frameworks. Yet, so far, only few results (see \cite{Mori1986,viens07,arr14}) are known in the framework of non-Gaussian Wiener chaoses. The important article \cite{viens07} provides a general approach allowing to obtain moduli of continuity for a wide class of sub-$n$th chaos processes which includes the Hermite process $X^{N,H}$. An almost sure upper bound of the asymptotic behavior of the local oscillations of the paths of $X^{N,H}$ can be obtained in this way. More precisely, for all $\o\in\O$, for each fixed point $\tau\in (0,+\infty)$ and for any real number $r\in (0,\tau]$, the oscillation on the compact interval $[\tau-r,\tau+r]$ of the path $X^{N,H}(\o)$ is the positive and finite quantity denoted by $\os\big(X^{N,H}(\o),\tau, r\big)$ and defined as:
\begin{equation}
\label{eq:def-os}
\os \big(X^{N,H}(\o),\tau, r\big):=\sup\Big\{\big |X^{N,H}(t',\o)-X^{N,H}(t'',\o)\big|\,:\, (t',t'')\in [\tau-r,\tau+r]^2\Big\}.
\end{equation}
The modulus of continuity for $X^{H,N}(\o)$ obtained thanks to \cite{viens07} allows to derive, for almost all $\o\in\O$, that
\begin{equation}
\label{eq:viens}
\limsup_{r\rightarrow 0} \bigg\{r^{-N(H-1)-1}\,|\log_2 r\, |^{-N/2}\,\sup_{\tau\in I}\os \big(X^{N,H}(\o),\tau, r\big)\bigg\}<+\infty\,,
\end{equation}  
where $I\subset (0,+\infty)$ denotes an arbitrary deterministic compact interval.

It seems natural to look for a non-trivial almost sure lower bound for the asymptotic behavior of  $\os \big(X^{N,H}(\o),\tau, r\big)$, as $r$ goes to $0$. It is important that such a lower bound be valid on an event of probability~$1$ not depending on $\tau$, since by this way it would be possible to derive from it nowhere differentiability of paths of Hermite processes.

Even in the most classical case of the Gaussian Brownian motion such kind of problems are rather difficult to solve. Nowhere differentiability of Brownian paths was first established by Paley, Wiener and Zygmund \cite{pwz33}. Later Dvoretzki \cite{dvo63} was able to obtain sharp lower bounds for their local oscillations. A general and powerful strategy for dealing with this type of problems on everywhere irregularity of paths was first introduced in the early 70's by Berman \cite{ber70,ber72,ber73} in the Gaussian frame and was later extended by Nolan \cite{nol89} to the frame of stable distributions. It relies on a very clever intuitive idea called  the Berman's principle: "the more regular is a local time in the time variable, uniformly in the space variable, the more irregular is the associated stochastic process"; for instance, when the local time is jointly continuous in the time and space variables then the corresponding process has nowhere differentiable paths. Many important developments of this classical and powerful strategy relying on local times are due to Xiao (see e.g. \cite{xia97,xia06,xia13}) who among many other things has introduced strengthened and sectorial versions of the crucial concept of local-nondeterminism. Unfortunately, this strategy can hardly be used in the framework of the Hermite process $X^{N,H}$ since, in contrast with Gaussian and stable processes, there is no explicit and easy exploitable formulas for the characteristic functions of the finite-dimensional distributions of $X^{N,H}$, even in the most simple case of the Rosenblatt process where~$N=2$.

An alternative strategy, relying on wavelet-type series representations, which allows to bound from below the asymptotic behavior of the local oscillations of stochastic processes and fields, was first introduced in \cite{ayache2007}. Several variants of it turned out to be efficient (see e.g. \cite{Ayache2005,ayache2009,arr14}). Yet, in the case of the Hermite process $X^{N,H}$, with $N\ge 3$, no wavelet-type series representation is available so far. Moreover, in the particular case of the Rosenblatt process $X^{2,H}$, the methodology of \cite{ayache2007} seems to be hardly applicable to the wavelet-type series representations of $X^{2,H}$ which were introduced in \cite{Pipiras2004}. Before ending this paragraph, one mentions that the classical strategy due to Berman and the alternative strategy relying on wavelets are presented in detail in the very recent book~\cite{ayache2019}.

Let us now explain in an heuristic way the main ingredient of the new strategy we introduce in the present article. {\em Usually in the literature, fractional Brownian motion, Rosenblatt process and more generally Hermite processes are viewed as stochastic processes whose increments are (rather strongly) correlated, and may even display long range dependence. Our new strategy relies on a different and maybe new way to view increments of these processes in the setting of the study of their path behavior: "many ones of the increments are independent random variables up to negligible remainders".}
 

This new strategy allows us to prove in the present article the following theorem which provides when $\o$ belongs to $\O^*$, a universal event of probability~$1$ not depending on $\tau$, a quasi-optimal lower bound for the asymptotic behavior of  $\os \big(X^{N,H}(\o),\tau, r\big)$, as $r$ goes to $0$.

\begin{theorem}
\label{thm:main}
There exist $\O^*$, a universal event of probability~$1$ not depending on $\tau$, and $c_{N,H}^*$ a (strictly) positive deterministic finite constant only depending $(N,H)$, such that, for all $\o\in\O^*$ and for every $\tau\in (0,+\infty)$, one has
\begin{equation}
\label{thm:main:eq1}
\liminf_{r\rightarrow 0} \bigg\{\Big (r^{-1}\,|\log_2 r\, |\,S\big (|\log_2 r|\big)\Big)^{N(H-1)+1}\,\os \big(X^{N,H}(\o),\tau, r\big)\bigg\}\ge c_{N,H}^*>0\,,
\end{equation}
where $\log_2$ is the logarithm to the base $2$, and $S$ is an arbitrary increasing continuous function on $\R_+$ which satisfies 
\begin{equation}
\label{thm:main:eq2}
S(0)\ge 2\,,\quad\lim_{r\rightarrow +\infty}\frac{z^{\frac{N}{2(1-H)}}}{S(z)}=0\,,\quad \lim_{r\rightarrow +\infty}\frac{z^{\frac{N}{2(1-H)}+\ep}}{S(z)}=+\infty\,,\quad\mbox{for all fixed $\ep>0$,}
\end{equation}
and
\begin{equation}
\label{thm:main:eq2bis}
\sup_{z\in\R_+}\frac{S\big(z+\a \log_2 (2+z)\big)}{S(z)}<+\infty\,, \quad\mbox{for each fixed $\a>0$.}
\end{equation}
\end{theorem}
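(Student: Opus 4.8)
The plan is to bound the oscillation from below by the modulus of a single increment over a carefully chosen small sub-interval, and to produce, on one universal event, such a large increment near \emph{every} $\tau$ and at every small scale. I would work along the dyadic scales $r_j=2^{-j}$ (so $|\log_2 r_j|=j$) and a net of base points of spacing $\asymp r_j$ inside each compact $(0,T]$; monotonicity of $\os$ under interval inclusion, together with the scale-regularity assumption (\ref{thm:main:eq2bis}), will then transfer a discrete bound at net points to all $\tau$ and all $r\to 0$, and a countable union over $T\in\N$ yields one event. It thus suffices to exhibit, on an event of probability $1$, for all large $j$ and all net points $\tau$, some $[a,b]\subseteq[\tau-r_j,\tau+r_j]$ with $|X^{N,H}(b)-X^{N,H}(a)|\ge\tfrac12\theta_0\,\ell^{\beta}$, where $\beta:=N(H-1)+1$ and $\ell\asymp r_j/(jS(j))$.

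The construction uses \emph{two} scales. I partition $[\tau-r_j,\tau+r_j]$ into $m\asymp j=|\log_2 r_j|$ consecutive blocks of length $\delta=2r_j/m$, and inside the $k$-th block I place a sub-interval $I_k=[b_k-\ell,b_k]$ of length $\ell=\delta/K$ at its right end, with $K\asymp S(j)$, so that $\ell\asymp r_j/(jS(j))$ and $\delta\gg\ell$. For the increment of $X^{N,H}$ over $I_k$ I split the defining multiple integral (\ref{eq:def-her}) into a main part $\tilde D_k$, obtained by restricting all variables $x_1,\dots,x_N$ to the $k$-th block, and a remainder $\mathrm{Rem}_k$ carrying every term with at least one $x_p$ to the left of that block. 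Since the blocks are disjoint and $\tilde D_k$ is a functional of $B$ restricted to the $k$-th block only, the family $\{\tilde D_k\}_k$ is \emph{independent}; moreover each $\tilde D_k$ lies in the $N$-th chaos with $\E|\tilde D_k|^2\asymp\ell^{2\beta}$, the lower bound coming from the fully local contribution $x_1,\dots,x_N\in I_k$.

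I would then run two Borel--Cantelli arguments. First, a uniform anti-concentration bound on $\max_k|\tilde D_k|$: by hypercontractivity $\E|\tilde D_k|^4\le C(\E|\tilde D_k|^2)^2$, so Paley--Zygmund gives $\P(|\tilde D_k|\ge\theta_0\ell^{\beta})\ge p_0>0$ for a fixed small $\theta_0$, uniformly in $j,k$; by independence $\P(\forall k:\ |\tilde D_k|<\theta_0\ell^{\beta})\le(1-p_0)^{m}$, which with $m\asymp j$ is summable against the $\asymp 2^{j}$ net points. Second, an upper bound on $\max_k|\mathrm{Rem}_k|$: each $\mathrm{Rem}_k$ is again in the $N$-th chaos, so $\P(|\mathrm{Rem}_k|>t)\le\exp(-c(t/\sigma_R)^{2/N})$ with $\sigma_R:=(\E|\mathrm{Rem}_k|^2)^{1/2}$, and a union bound over the $\asymp 2^{j}m$ remainders is summable once $\theta_0\ell^{\beta}/\sigma_R\gtrsim j^{N/2}$. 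On the resulting universal event, for all large $j$ and all net $\tau$ some $|\tilde D_k|\ge\theta_0\ell^{\beta}$ while every $|\mathrm{Rem}_k|\le\tfrac12\theta_0\ell^{\beta}$, whence $|X^{N,H}(b_k)-X^{N,H}(b_k-\ell)|\ge|\tilde D_k|-|\mathrm{Rem}_k|\ge\tfrac12\theta_0\ell^{\beta}$, as required.

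The crux, and the only genuinely delicate estimate, is controlling $\sigma_R$, i.e. showing the remainder is really smaller than the main part; this is where the two-scale design $\delta\gg\ell$ is indispensable. With all remote variables at distance $\ge\delta-\ell$ from $I_k$ the kernel is non-singular in $s$ there, and a term-by-term $L^2$ computation should show that the dominant remainder is the one with exactly one remote variable (the others near the diagonal of $I_k$), of size $\sigma_R\asymp\delta^{-(1-H)}\,\ell^{(N-1)(H-1)+1}$, so that $\theta_0\ell^{\beta}/\sigma_R\asymp(\delta/\ell)^{1-H}=K^{1-H}$. The requirement $\theta_0\ell^{\beta}/\sigma_R\gtrsim j^{N/2}$ then forces $K\gtrsim j^{N/(2(1-H))}$, i.e. $S(j)\asymp j^{N/(2(1-H))}$, which is exactly the growth prescribed by the lower condition in (\ref{thm:main:eq2}); the upper condition in (\ref{thm:main:eq2}) ensures the bound is lost only by a slowly varying factor (quasi-optimality), while (\ref{thm:main:eq2bis}) supplies the scale-regularity used in the net-to-continuum passage. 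I expect the main technical burden to be verifying rigorously that $\sigma_R$ is dominated by this single ``one remote variable'' term rather than by some singular mixed contribution, and that the pulled-out remote kernel factor $(b_k-x_p)^{H-3/2}$ may indeed be treated as slowly varying over $I_k$.
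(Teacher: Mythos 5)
Your proposal is correct and takes essentially the same route as the paper: the two-scale design with a short increment at the right end of each buffer block, the splitting of the multiple Wiener integral into a block-local main part (independent across blocks) and a remote remainder, the remainder variance estimate $\sigma_R\asymp \ell^{N(H-1)+1}K^{H-1}$ obtained by pulling out one remote kernel factor and reducing to a rank-$(N-1)$ Hermite increment (the paper's Lemma~\ref{lem:var-breve}, which confirms your anticipated one-remote-variable dominance), the chaos tail bound plus Borel--Cantelli for the remainders, anti-concentration for $\asymp j$ independent main parts per location, and the role of (\ref{thm:main:eq2}) in forcing $S(j)\gtrsim j^{N/(2(1-H))}$ together with (\ref{thm:main:eq2bis}) for the dyadic-to-continuous transfer. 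The only cosmetic deviations are your use of Paley--Zygmund via hypercontractivity where the paper invokes Janson's uniform small-ball bound $\P\big(|\chi|<2^{-1}\|\chi\|_{L^2(\O)}\big)\le\gamma_N<1$, and your union bound over a net of base points with overlapping windows where the paper instead partitions a single family of spaced dyadic increments into $M_j$ groups of $\asymp j$ consecutive indices and afterwards locates, for each $\tau$, the group containing $l_j(\tau)$.
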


\begin{remarks}
\label{rem:nondiff} 
\begin{itemize}
\item[(i)] The condition $S(0)\ge 2$ in (\ref{thm:main:eq2}) can be weakened to $S(0)>0$.
\item[(ii)] There are many classes of examples of increasing continuous functions $S$ on $\R_+$ which satisfy (\ref{thm:main:eq2}) and (\ref{thm:main:eq2bis}); a natural one of them is 
\[
S(z):=(2+z)^{\frac{N}{2(1-H)}}\,\big(\log_2(3+z)\big)^\beta\,,\quad\mbox{for all $z\in\R_+$,}
\]
where the positive real number $\beta$ is arbitrary and fixed. 
\item[(iii)] Notice that (\ref{thm:main:eq1}) is quasi-optimal since $r^{-1}$ in it is raised at the same power  $N(H-1)+1$ as in (\ref{eq:viens}).
\end{itemize}
\end{remarks}

An important straightforward consequence of Theorem~\ref{thm:main} is that there is no point in $(0,+\infty)$ at which a typical path of the Hermite process $X^{N,H}$ satisfy a pointwise H\"older condition of order strictly larger than $N(H-1)+1$. More precisely:

\begin{corollary}
\label{cor:main}
Let $\O^*$ be the same event of probability~$1$ as in Theorem~\ref{thm:main}. Let an arbitrary real number $\mu\in \big( N(H-1)+1,1)$. Then, for all $\o\in\O^*$ and for each $\tau\in (0,+\infty)$, one has that
\[
\limsup_{t\rightarrow \tau} \frac{\big | X^{N,H}(t,\o)-X^{N,H}(\tau,\o) \big|}{|t-\tau|^\mu}=+\infty\,.
\]
This clearly implies that, for any $\o\in\O^*$, the path $X^{N,H}(\o)$ is nowhere differentiable on the interval $(0,+\infty)$.
\end{corollary}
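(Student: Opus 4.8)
The plan is to deduce the corollary pathwise from the oscillation lower bound of Theorem~\ref{thm:main}, introducing no further probabilistic ingredient; the whole argument is carried out for a fixed $\omega\in\Omega^*$ and a fixed $\tau\in(0,+\infty)$. Write $\beta:=N(H-1)+1\in(1/2,1)$, so that the hypothesis on $\mu$ reads $\beta<\mu<1$. The first step turns the $\liminf$ statement~(\ref{thm:main:eq1}) into a pointwise bound valid for all small radii: applying the definition of the lower limit with gap $c_{N,H}^*/2$ yields a (random) radius $r_0=r_0(\omega,\tau)>0$ such that, for every $r\in(0,r_0]$,
\begin{equation*}
\os\big(X^{N,H}(\omega),\tau,r\big)\ \ge\ \frac{c_{N,H}^*}{2}\,\Big(r^{-1}\,|\log_2 r|\,S\big(|\log_2 r|\big)\Big)^{-\beta}\ =\ \frac{c_{N,H}^*}{2}\cdot\frac{r^{\beta}}{\big(|\log_2 r|\,S(|\log_2 r|)\big)^{\beta}}\,.
\end{equation*}

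The second step replaces the oscillation by an increment anchored at $\tau$. As the image $X^{N,H}\big([\tau-r,\tau+r],\omega\big)$ is a compact subset of $\R$ containing the point $X^{N,H}(\tau,\omega)$, its diameter — which is exactly $\os(X^{N,H}(\omega),\tau,r)$ — is at most twice the largest distance from that point; by continuity of the path this largest distance is attained at some $t_r\in[\tau-r,\tau+r]$. Therefore $\big|X^{N,H}(t_r,\omega)-X^{N,H}(\tau,\omega)\big|\ge\frac12\,\os(X^{N,H}(\omega),\tau,r)$, which is strictly positive; in particular $t_r\ne\tau$ and $0<|t_r-\tau|\le r$. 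Bounding $|t_r-\tau|^{\mu}\le r^{\mu}$ and inserting the previous display gives
\begin{equation*}
\frac{\big|X^{N,H}(t_r,\omega)-X^{N,H}(\tau,\omega)\big|}{|t_r-\tau|^{\mu}}\ \ge\ \frac{c_{N,H}^*}{4}\cdot\frac{r^{\beta-\mu}}{\big(|\log_2 r|\,S(|\log_2 r|)\big)^{\beta}}\,.
\end{equation*}

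The last step is the passage to the limit, which is also where the only real care is needed. The second requirement in~(\ref{thm:main:eq2}) forces $S(z)=O\big(z^{N/(2(1-H))+\ep}\big)$ for every $\ep>0$, so $\big(|\log_2 r|\,S(|\log_2 r|)\big)^{\beta}$ grows at most like a fixed power of $|\log_2 r|$ as $r\to0$; since $\beta-\mu<0$, the gain $r^{\beta-\mu}$ blows up polynomially in $1/r$ and overwhelms any power of the logarithm, so the right-hand side above tends to $+\infty$. Because $t_r\to\tau$ with $t_r\ne\tau$, this produces a sequence witnessing $\limsup_{t\to\tau}\big|X^{N,H}(t,\omega)-X^{N,H}(\tau,\omega)\big|/|t-\tau|^{\mu}=+\infty$, which is the asserted identity. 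Finally, if $X^{N,H}(\cdot,\omega)$ were differentiable at $\tau$ then $\big|X^{N,H}(t,\omega)-X^{N,H}(\tau,\omega)\big|/|t-\tau|$ would remain bounded near $\tau$, whence the same ratio would be $O\big(|t-\tau|^{1-\mu}\big)\to0$ as $t\to\tau$ (using $\mu<1$), contradicting the $\limsup$ just established; hence the path is nowhere differentiable on $(0,+\infty)$. The corollary is thus a formal consequence of the theorem, the essential content being the lower bound already proved; no genuine obstacle remains beyond confirming that the $S$-factor is negligible against the polynomial gain $r^{\beta-\mu}$.
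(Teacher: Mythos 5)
Your proof is correct and follows exactly the route the paper intends: the author states the corollary without a written proof, calling it a straightforward consequence of Theorem~\ref{thm:main}, and your argument --- turning the $\liminf$ in (\ref{thm:main:eq1}) into an eventual pointwise lower bound on the oscillation, anchoring it at $\tau$ via the factor-of-two diameter estimate with a maximizer $t_r\ne\tau$, and observing that $r^{N(H-1)+1-\mu}$ blows up faster than the at-most-polynomial growth in $|\log_2 r|$ of $\big(|\log_2 r|\,S(|\log_2 r|)\big)^{N(H-1)+1}$ guaranteed by the third condition in (\ref{thm:main:eq2}) --- supplies precisely that deduction. The concluding step ruling out differentiability (boundedness of the difference quotient combined with $\mu<1$) is the standard one, so nothing is missing.
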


\section{Proof of Theorem~\ref{thm:main}}
\label{sec:proof}
First observe that the interval $(0,+\infty)$ can be expressed as the countable union of the open, bounded and overlapping intervals $\big (2^{-1} q, 1+2^{-1} q\big)$, $q\in\Z_+$. So, it is enough to prove the theorem for every $\tau\in\big(2^{-1} q, 1+2^{-1} q\big)$, the nonnegative integer $q$ being arbitrary and fixed. For the sake of simplicity, one assumes that $q=0$, that is $\tau\in (0,1)$; the proof can be done in the same way for any other $q$. Also one assumes that $N\ge 2$, since $N=1$ corresponds to the case of the Gaussian fBm which was studied in detail a long time ago.

For any integers $j\ge 1$ and $k\in\{0,\ldots, 2^{j}-1\}$, one denotes by $\De(j,k)$ the increment of the process $X^{N,H}$ such that 
\begin{equation}
\label{eq:def-delta}
\De(j,k):=X^{N,H}(d_{j,k+1})-X^{N,H}(d_{j,k})\,,
\end{equation} 
where $d_{j,k+1}$ and $d_{j,k}$ are the two dyadic numbers in the interval $[0,1]$ defined as:
\begin{equation}
\label{eq:dya}
d_{j,k+1}:=(k+1)/2^j\quad\mbox{and}\quad d_{j,k}:=k/2^j\,.
\end{equation}
Observe that, in view of (\ref{eq:def-delta}), (\ref{eq:def-her}) and (\ref{eq:par-pos}), the increment $\De (j,k)$ can be expressed as: 
\begin{eqnarray}
\label{eq1:delta}
\De(j,k)&:=&\int_{\R^N}' \Big (\int_{d_{j,k}}^{d_{j,k+1}} \prod_{p=1}^N(s-x_p)_+^{H-3/2}\,ds\Big)\, dB(x_1)\ldots dB(x_N)\\
&=& \int_{\R^N}' \Big (\I_{\In_{j,k}}(x_1,\ldots, x_N)\int_{d_{j,k}}^{d_{j,k+1}} \prod_{p=1}^N(s-x_p)_+^{H-3/2}\,ds\Big)\, dB(x_1)\ldots dB(x_N)\,,\nonumber
\end{eqnarray}
where $\I_{\In_{j,k}}$ is the indicator function of the unbounded rectangle of $\R^N$:
\begin{equation}
\label{eq:def-IN}
\In_{j,k}:= (-\infty, d_{j,k+1}]^N.
\end{equation}
Notice that, in view of the second equality in (\ref{thm:main:eq2}), there exists a positive integer $\jb$ such that, for each integer $j\ge \jb$, one has $S(j)\le 2^{j-4}$. From now on, one always assumes that $j\ge \jb$. One let $e_j$ be the integer part of $S(j)$ (recall that $S(j)\ge 2$), that is 
\begin{equation}
\label{eq:ej}
e_j:=\li S(j)\ri\,,
\end{equation}  
and one denotes by $\L^j$ the non-empty finite set of positive integers defined as: 
\begin{equation}
\label{eq:setL}
\L^j:=\N\cap \big [1, (2^j/e_j)-1\big ]\,.
\end{equation}
Observe that the cardinality of $\L^j$ satisfies, for some positive finite constant $c$ not depending on $j$,
\begin{equation}
\label{eq:card-setL}
\car(\L^j)\le c\,2^j /e_j\,.
\end{equation}
For each $l\in\L^j$, one denotes $\D_{j,l e_j}$ and $\overline{\D}_{j,l e_j}$ the two non-empty subsets of $\In_{j,l e_j}$ (see (\ref{eq:def-IN})) defined as:
\begin{equation}
\label{eq:setD}
\D_{j,l e_j}:=[d_{j, (l-1)e_j+1}, d_{j, l e_j+1}]^N\quad\mbox{and}\quad \overline{\D}_{j,l e_j}:=\In_{j,l e_j}\setminus \D_{j,l e_j}=\{x\in \In_{j,l e_j}: x\notin \D_{j,l e_j}\} \,.
\end{equation}
One clearly has that $\D_{j,l e_j}\cap\overline{\D}_{j,l e_j}=\emptyset$ and $\In_{j,l e_j}=\D_{j,l e_j}\cup\overline{\D}_{j,l e_j}$.
Thus, using the second equality in (\ref{eq1:delta}), one gets that
\begin{equation}
\label{eq:decom-de}
\De(j, l e_j)=\wt{\De}(j, l e_j)+\breve{\De}(j, l e_j)\,,
\end{equation}
where 
\begin{equation}
\label{eq:tilde-de}
\wt{\De}(j, l e_j)=\int_{\R^N}' \Big (\I_{\D_{j,l e_j}}(x_1,\ldots, x_N)\int_{d_{j,l e_j}}^{d_{j,l e_j+1}} \prod_{p=1}^N(s-x_p)_+^{H-3/2}\,ds\Big)\, dB(x_1)\ldots dB(x_N)
\end{equation}
and 
\begin{equation}
\label{eq:breve-de}
\breve{\De}(j, l e_j)=\int_{\R^N}' \Big (\I_{\overline{\D}_{j,l e_j}}(x_1,\ldots, x_N)\int_{d_{j,l e_j}}^{d_{j,l e_j+1}} \prod_{p=1}^N(s-x_p)_+^{H-3/2}\,ds\Big)\, dB(x_1)\ldots dB(x_N)\,.
\end{equation}

Roughly speaking, as we have already pointed out in the previous section, {\em the main ingredient of our strategy for proving Theorem~\ref{thm:main} consists to show that $\breve{\De}(j, l e_j)$ is in some sense (which will be made more precise in the sequel) negligible with respect to $\wt{\De}(j, l e_j)$, and that the random variables $\wt{\De}(j, l e_j)$, $l\in\L^j$, satisfy the very nice independence property.} To this end, some lemmas are needed.

\begin{lemma}
\label{lem:inde}
For each fixed integer $j\ge \jb$, the random variables $\wt{\De}(j, l e_j)$, $l\in\L^j$, are independent.
\end{lemma}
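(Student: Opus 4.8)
The plan is to recognize each $\wt{\De}(j, l e_j)$ as a multiple Wiener integral whose kernel is supported on the $N$-fold product of one single small interval, to show that these intervals are pairwise disjoint up to Lebesgue-negligible sets, and then to deduce independence from the fact that a multiple Wiener integral sitting on a product of intervals is a functional of the Brownian increments over those intervals alone. Concretely, for each $l\in\L^j$ I would set $A_l:=[d_{j,(l-1)e_j+1},\,d_{j,l e_j+1}]$, so that by (\ref{eq:setD}) one has $\D_{j,l e_j}=A_l^N$. Reading off (\ref{eq:tilde-de}), $\wt{\De}(j, l e_j)$ is precisely the multiple Wiener integral $I_N(f_l)$ of the symmetric kernel
\[
f_l(x_1,\ldots,x_N)=\I_{A_l^N}(x_1,\ldots,x_N)\int_{d_{j,l e_j}}^{d_{j,l e_j+1}}\prod_{p=1}^N (s-x_p)_+^{H-3/2}\,ds\,,
\]
which, as a function of $(x_1,\ldots,x_N)$, is supported in the cube $A_l^N$ and therefore belongs to $L^2(A_l)^{\otimes N}$.

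Next I would establish the disjointness of the intervals $A_l$, $l\in\L^j$. Using (\ref{eq:dya}), the right endpoint of $A_l$ equals $(l e_j+1)2^{-j}$ while the left endpoint of $A_{l'}$ equals $((l'-1)e_j+1)2^{-j}$; hence, for $l<l'$ in $\L^j$ one has $l\le l'-1$, so $l e_j\le (l'-1)e_j$, which shows that the interior of $A_l$ lies entirely to the left of $A_{l'}$ and that $A_l\cap A_{l'}$ reduces to at most the single common endpoint arising when $l'=l+1$. In particular the pairwise intersections of the $A_l$ carry no Lebesgue mass, so the subspaces $L^2(A_l)\subset L^2(\R)$ are mutually orthogonal.

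The decisive structural step is then to argue that each $I_N(f_l)$ is measurable with respect to the $\sigma$-algebra $\mathcal{G}_l$ generated by the centered Gaussian family $\{\int_\R g\,dB : \Sp(g)\subset A_l\}$, i.e.\ by the increments of $B$ inside $A_l$. This holds because, by construction, $I_N(f_l)$ is an $L^2$-limit of integrals of symmetric simple kernels supported on $A_l^N$, each of which is an explicit polynomial in Brownian increments over sub-intervals of $A_l$ (see \cite{nualart,janson1997gaussian}). The generating Gaussian families attached to distinct $A_l$ are jointly Gaussian and have vanishing cross-covariance, since $\int g\,g'=0$ whenever $\Sp(g)\subset A_l$ and $\Sp(g')\subset A_{l'}$ with $l\ne l'$; being jointly Gaussian and uncorrelated across blocks, they are independent. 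Consequently the $\sigma$-algebras $\mathcal{G}_l$, $l\in\L^j$, are independent, and so are the random variables $\wt{\De}(j, l e_j)=I_N(f_l)$.

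I expect the only genuine delicacy — the main obstacle — to be the rigorous justification of this measurability claim, namely that $I_N(f_l)$ depends only on $B$ restricted to $A_l$. I would handle it by approximating $f_l$ in $L^2(A_l^N)$ by symmetric simple kernels vanishing on the diagonals and supported in $A_l^N$, evaluating the integral of each as a polynomial in Brownian increments over sub-intervals of $A_l$, and passing to the $L^2$-limit. The remaining points are routine bookkeeping; in particular the shared endpoints between consecutive $A_l$ cause no difficulty, since single points carry no Lebesgue measure and hence do not affect the orthogonality of the subspaces $L^2(A_l)$ nor the independence of the associated Gaussian blocks.
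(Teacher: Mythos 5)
Your proposal is correct and follows essentially the same route as the paper's own proof: approximating the kernel in $L^2$ by simple kernels supported in the cube $\D_{j,le_j}$ and vanishing on the diagonals, deducing that $\wt{\De}(j,le_j)$ is measurable with respect to the $\si$-algebra generated by the Brownian increments inside $[d_{j,(l-1)e_j+1},d_{j,le_j+1}]$, and concluding via the independence of Brownian increments over disjoint intervals. Your extra care about the shared endpoints of consecutive intervals is a harmless refinement (the paper simply works with the open intervals, which are disjoint), and your Gaussian-uncorrelatedness phrasing of the last step is equivalent to the paper's direct appeal to independence of increments.
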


\begin{proof}{of Lemma~\ref{lem:inde}} The integrand in (\ref{eq:tilde-de}) can be approximated in the sense of the $L^2 (\R^N)$ norm by a sequence $(\phi_n)_{n\in\N}$ of real-valued step functions on $\R^N$ which vanish on the diagonals $\{x_{p'}=x_{p''}\}$, $p'\ne p''$, and also outside of the cube $\D_{j, l e_j}$ (see (\ref{eq:setD})). Thus, using the "isometry" property of multiple Wiener it turns out that 
\begin{equation}
\label{lem:inde:eq1}
\wt{\De}(j, l e_j)=\lim_{n\rightarrow +\infty} \int_{\R^N}' \phi_n (x_1,\ldots, x_N) \, dB(x_1)\ldots dB(x_N),\quad \mbox{(in $L^2 (\O)$).}
\end{equation}
On the other hand, one knows, from the definition of a multiple Wiener integral and from the fact that the step function $\phi_n$ vanishes outside of $\D_{j, l e_j}$, that the random variable $\int_{\R^N}' \phi_n (x_1,\ldots, x_N) \, dB(x_1)\ldots dB(x_N)$ can be expressed as a polynomial function in terms of a finite number of increments $B(a^n_{i+1})-B(a_i^n)$ of the Brownian motion $B$, where $(a_i^n)_i$ is a finite increasing sequence of real numbers belonging to the interval $[d_{j, (l-1)e_j+1}, d_{j, l e_j+1}]$. Thus, it turns out that $\int_{\R^N}' \phi_n (x_1,\ldots, x_N) \, dB(x_1)\ldots dB(x_N)$ is measurable with respect to the $\si$-algebra $\si \big (B(x)-B(y)\,:\, x,y \in (d_{j, (l-1)e_j+1}, d_{j, l e_j+1})\big)$. Combining the latter fact with (\ref{lem:inde:eq1}) it follows that $\wt{\De}(j, l e_j)$ is measurable with respect to the same $\si$-algebra.

Finally, observe that the independence of increments property of the Brownian motion $B$ and the fact the intervals 
$(d_{j, (l-1)e_j+1}, d_{j, l e_j+1})$, $l\in\L^j$, are disjoint imply that the $\si$-algebras $\si \big (B(x)-B(y)\,:\, x,y \in (d_{j, (l-1)e_j+1}, d_{j, l e_j+1})\big)$, $l\in\L^j$, are independent, which in turn entails that the random variables $\wt{\De}(j, l e_j)$, $l\in\L^j$, are independent as well.
\end{proof}

\begin{lemma}
\label{lem:var-tilde}
Let $\wt{c}$ be the finite (strictly) positive constant defined as
\begin{equation}
\label{lem:var-tilde:eq1} 
\wt{c}:=\bigg(\int_{[-1,1]^N}\Big |\int_{0}^{1} \prod_{p=1}^N(u-y_p)_+^{H-3/2}\,ds\Big |^2\, d y_1\ldots d y_N\,\bigg)^{1/2}\,.
\end{equation}
Then, for all integer $j\ge \jb$, one has 
\begin{equation}
\label{lem:var-tilde:eq2}
\inf_{l\in\L^j} \big\| \wt{\De}(j, l e_j)\big\|_{L^2 (\O)}\ge \wt{c}\, 2^{-j(N(H-1)+1)}\,. 
\end{equation}
\end{lemma}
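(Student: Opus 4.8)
The plan is to reduce the claim to a deterministic $L^2(\R^N)$ estimate via the Wiener--Itô isometry, and then extract the correct power of $2^{-j}$ by a scaling change of variables, using the fact that the reference cube in (\ref{lem:var-tilde:eq1}) is contained in the rescaled integration domain.

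First I would observe that the integrand appearing in (\ref{eq:tilde-de}), namely
\[
f_{j,l}(x_1,\ldots,x_N):=\I_{\D_{j,l e_j}}(x_1,\ldots,x_N)\int_{d_{j,l e_j}}^{d_{j,l e_j+1}}\prod_{p=1}^N(s-x_p)_+^{H-3/2}\,ds,
\]
is symmetric in $(x_1,\ldots,x_N)$, since both the indicator of the cube $\D_{j,l e_j}=[d_{j,(l-1)e_j+1},d_{j,l e_j+1}]^N$ and the product $\prod_p(s-x_p)_+^{H-3/2}$ are symmetric. Because the excluded diagonals carry zero Lebesgue measure, the multiple Wiener integral in (\ref{eq:tilde-de}) coincides with the Wiener--Itô integral of $f_{j,l}$, so the isometry property gives $\big\|\wt{\De}(j,l e_j)\big\|_{L^2(\O)}^2=N!\,\|f_{j,l}\|_{L^2(\R^N)}^2$. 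Thus it suffices to bound $\|f_{j,l}\|_{L^2(\R^N)}^2$ from below, uniformly in $l\in\L^j$; and in the final step only the harmless fact $N!\ge 1$ will be used, so the precise normalization constant is irrelevant.

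Next I would perform the scaling. The inner $s$--interval $[d_{j,l e_j},d_{j,l e_j+1}]$ has length $2^{-j}$ and sits at the right edge of the cube side $[d_{j,(l-1)e_j+1},d_{j,l e_j+1}]$, which has length $e_j2^{-j}$. I substitute $s=(l e_j+u)2^{-j}$ with $u\in[0,1]$ and $x_p=(l e_j+y_p)2^{-j}$, so that $s-x_p=(u-y_p)2^{-j}$ and each $y_p$ ranges over $[1-e_j,1]$. Using $(s-x_p)_+^{H-3/2}=2^{-j(H-3/2)}(u-y_p)_+^{H-3/2}$ together with the Jacobian factors $ds=2^{-j}\,du$ and $dx_p=2^{-j}\,dy_p$, and collecting all powers of $2$, I obtain
\[
\|f_{j,l}\|_{L^2(\R^N)}^2=2^{-2j(N(H-1)+1)}\int_{[1-e_j,1]^N}\Big|\int_0^1\prod_{p=1}^N(u-y_p)_+^{H-3/2}\,du\Big|^2\,dy_1\ldots dy_N.
\]
The key point is that, after scaling, the right--hand integral no longer depends on $l$, so the bound will be automatically uniform over $l\in\L^j$. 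I expect the exponent bookkeeping to be the main technical step; a sanity check against the self--similarity exponent $N(H-1)+1$ of property (i) and against (\ref{eq:var-X}) confirms the arithmetic $-jN-2j\big(1+N(H-3/2)\big)=-2j\big(N(H-1)+1\big)$.

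Finally, since $e_j=\li S(j)\ri\ge 2$ (because $S$ is increasing and $S(0)\ge 2$, whence $S(j)\ge 2$), one has $1-e_j\le -1$ and therefore $[1-e_j,1]^N\supseteq[-1,1]^N$. As the integrand is nonnegative, restricting the domain of integration to $[-1,1]^N$ only decreases the integral, producing exactly $\wt{c}^2$ from (\ref{lem:var-tilde:eq1}). Combining this with $N!\ge 1$ and taking square roots yields $\big\|\wt{\De}(j,l e_j)\big\|_{L^2(\O)}\ge \wt{c}\,2^{-j(N(H-1)+1)}$ for every $l\in\L^j$, hence (\ref{lem:var-tilde:eq2}). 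The only points requiring genuine care are the symmetry and diagonal--exclusion justification of the isometry and the bookkeeping of the scaling exponent; the monotonicity step and the estimate $e_j\ge2$ are immediate.
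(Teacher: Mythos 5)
Your proposal is correct and follows essentially the same route as the paper's proof: the Wiener--It\^o isometry via symmetry of the integrand, the affine change of variables $s=d_{j,le_j}+2^{-j}u$, $x_p=d_{j,le_j}+2^{-j}y_p$ producing the factor $2^{-2j(N(H-1)+1)}$ and the $l$-independent domain $[1-e_j,1]^N$, and the inclusion $[-1,1]^N\subseteq[1-e_j,1]^N$ from $e_j\ge 2$ together with nonnegativity of the integrand. Your exponent bookkeeping and the explicit use of $N!\ge 1$ match the paper's computation, so no gap remains.
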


\begin{proof}{of Lemma~\ref{lem:var-tilde}} Using the fact that the integrand in (\ref{eq:tilde-de}) is a symmetric function in the variables $x_1, \ldots , x_N$ and the "isometry" property of multiple Wiener integral one gets, for every $j\ge\jb$ and $l\in\L^j$, that
\[
\big\| \wt{\De}(j, l e_j)\big\|_{L^2 (\O)}^2:=\E \big (|\wt{\De}(j, l e_j)|^2\big )
=N! \int_{\D_{j, l e_j}}\Big |\int_{d_{j,l e_j}}^{d_{j,l e_j+1}} \prod_{p=1}^N(s-x_p)_+^{H-3/2}\,ds\Big |^2\, dx_1\ldots d x_N\,.
\]
Next, using the change of variable $s=d_{j,l e_j}+2^{-j}u$ and (\ref{eq:dya}), one obtains that 
\[
\big\| \wt{\De}(j, l e_j)\big\|_{L^2 (\O)}^2=N! \, 2^{-2j}\int_{\D_{j, l e_j}}\Big |\int_{0}^{1} \prod_{p=1}^N(d_{j,l e_j}+2^{-j}u-x_p)_+^{H-3/2}\,ds\Big |^2\, dx_1\ldots d x_N\,.
\]
Then, the first equality in (\ref{eq:setD}), the change of variables $x_p=d_{j,l e_j}+2^{-j}y_p$, for all $p\in\{1,\ldots, N\}$, the inequality $e_j\ge 2$, for every $j\ge \jb$, and (\ref{lem:var-tilde:eq1}) imply that
\begin{eqnarray*}
\big\| \wt{\De}(j, l e_j)\big\|_{L^2 (\O)}^2 &=&N!\,2^{-(N+2)j}\int_{[1-e_j , 1]^N}\Big |\int_{0}^{1} \prod_{p=1}^N(2^{-j}u-2^{-j}y_p)_+^{H-3/2}\,ds\Big |^2\, d y_1\ldots d y_N\\
&=& N!\,2^{-2j(N(H-1)+1)}\int_{[1-e_j , 1]^N}\Big |\int_{0}^{1} \prod_{p=1}^N(u-y_p)_+^{H-3/2}\,ds\Big |^2\, d y_1\ldots d y_N\\
&\ge & \wt{c}^{\,2}\, 2^{-2j(N(H-1)+1)}\,.
\end{eqnarray*}
\end{proof}

\begin{lemma}
\label{lem:var-breve}
There exists a positive finite constant $c$ such that, for all integer $j\ge \jb$, one has 
\begin{equation}
\label{lem:var-breve:eq1}
\sup_{l\in\L^j} \big\| \breve{\De}(j, l e_j)\big\|_{L^2 (\O)}\le c\, 2^{-j(N(H-1)+1)}\,e_j^{H-1}\,. 
\end{equation}
\end{lemma}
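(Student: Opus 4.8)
The plan is to reduce the $L^2(\O)$ norm to a deterministic integral by the isometry property, exactly as in the proof of Lemma~\ref{lem:var-tilde}, and then to exploit the fact that on (the rescaled version of) $\overline{\D}_{j,l e_j}$ at least one coordinate is bounded away from the support of the inner time integral, which forces one factor of the product to be uniformly small.

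First I would observe that the integrand in (\ref{eq:breve-de}) is a symmetric function of $x_1,\dots,x_N$: the kernel $g(x):=\int_{d_{j,l e_j}}^{d_{j,l e_j+1}}\prod_{p=1}^N(s-x_p)_+^{H-3/2}\,ds$ is symmetric, and the domain $\overline{\D}_{j,l e_j}$ is permutation invariant because both $\In_{j,l e_j}$ and $\D_{j,l e_j}$ are. Hence the isometry gives $\big\|\breve{\De}(j,l e_j)\big\|_{L^2(\O)}^2=N!\int_{\overline{\D}_{j,l e_j}}|g(x)|^2\,dx$. Performing the same change of variables $s=d_{j,l e_j}+2^{-j}u$ and $x_p=d_{j,l e_j}+2^{-j}y_p$ as in Lemma~\ref{lem:var-tilde}, and using that under this rescaling $\In_{j,l e_j}$ and $\D_{j,l e_j}$ become $(-\infty,1]^N$ and $[1-e_j,1]^N$, I obtain
\[
\big\|\breve{\De}(j,l e_j)\big\|_{L^2(\O)}^2=N!\,2^{-2j(N(H-1)+1)}\int_{(-\infty,1]^N\setminus[1-e_j,1]^N}|G(y)|^2\,dy,
\]
where $G(y):=\int_0^1\prod_{p=1}^N(u-y_p)_+^{H-3/2}\,du$. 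The prefactor $2^{-2j(N(H-1)+1)}$ is precisely the one appearing in (\ref{lem:var-breve:eq1}), so everything comes down to showing that the remaining integral is at most a constant times $e_j^{2(H-1)}$, uniformly in $j$ and $l$.

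Next I would estimate that integral. On the domain $(-\infty,1]^N\setminus[1-e_j,1]^N$ at least one coordinate, say $y_{p_0}$, satisfies $y_{p_0}<1-e_j$; by symmetry of $G$ and subadditivity it suffices to bound $N$ times the contribution of $\{y_1<1-e_j\}\cap(-\infty,1]^N$. On this set $y_1<1-e_j\le-1<0\le u$, so $u-y_1$ is positive and, since $H-3/2<0$, one has the crucial pointwise bound $(u-y_1)^{H-3/2}\le(-y_1)^{H-3/2}$. Factoring this out of the time integral leaves the rank-$(N-1)$ kernel $G_1(y_2,\dots,y_N):=\int_0^1\prod_{p=2}^N(u-y_p)_+^{H-3/2}\,du$, so that $|G(y)|^2\le(-y_1)^{2H-3}\,|G_1(y_2,\dots,y_N)|^2$ and the region factorizes:
\[
\int_{\{y_1<1-e_j\}}|G|^2\,dy\le\Big(\int_{-\infty}^{1-e_j}(-y_1)^{2H-3}\,dy_1\Big)\Big(\int_{(-\infty,1]^{N-1}}|G_1|^2\,dy_2\dots dy_N\Big).
\]
The first factor equals $(e_j-1)^{2(H-1)}/\big(2(1-H)\big)$ (the integral converges exactly because $H<1$), and since $e_j\ge2$ gives $e_j-1\ge e_j/2$ it is bounded by a constant times $e_j^{2(H-1)}$; the second factor is a finite constant independent of $j$ and $l$. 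Collecting the constants into $c^2:=N!\,C$ and taking square roots then yields (\ref{lem:var-breve:eq1}).

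The main obstacle, and the step requiring care, is precisely this last estimate. The naive uniform bound $(u-y_1)^{H-3/2}\le(e_j-1)^{H-3/2}$ would make the subsequent $y_1$-integral diverge; one must instead retain the decay $(-y_1)^{2H-3}$ and integrate its tail, which is exactly what produces the gain $e_j^{2(H-1)}$. The only remaining point to verify is the finiteness of $\int_{(-\infty,1]^{N-1}}|G_1|^2$: the inner singularity at $u=y_p$ is integrable because $H-3/2>-1$ (i.e. $H>1/2$), while the tails as $y_p\to-\infty$ decay fast enough because $H<1$; both hold since $H\in(1-1/(2N),1)$, which moreover places $H$ in the admissible range for rank $N-1$ as well. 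This is the same integrability mechanism that makes $c_{N,H}$ finite, so no new hypothesis is needed.
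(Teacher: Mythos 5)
Your proof is correct and follows essentially the same route as the paper's: symmetry plus the isometry property, reduction by a union bound to one coordinate lying below the buffer, the monotonicity bound $(u-y_1)^{H-3/2}\le(-y_1)^{H-3/2}$, and integration of the tail to produce the gain $e_j^{2(H-1)}$. The only differences are cosmetic: you rescale to the unit interval first (extracting $2^{-2j(N(H-1)+1)}$ upfront), and you justify the finiteness of the $(N-1)$-fold integral directly, whereas the paper identifies it via the isometry with $c_{N-1,H}\,2^{-2j((N-1)(H-1)+1)}$, the variance of an increment of the rank-$(N-1)$ Hermite process --- your integral $\int_{(-\infty,1]^{N-1}}|G_1|^2$ is exactly $c_{N-1,H}/(N-1)!$ after rescaling, so the two are the same computation.
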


\begin{proof}{of Lemma~\ref{lem:var-breve}} Using the fact that the integrand in (\ref{eq:breve-de}) is a symmetric function in the variables $x_1, \ldots , x_N$, the "isometry" property of multiple Wiener integral, (\ref{eq:def-IN}), (\ref{eq:setD}), the inequality $d_{j,(l-1)e_j+1}<d_{j,l e_j}$ and the fact that $z\mapsto z^{H-3/2}$ is a decreasing function on $(0,+\infty)$, one gets that
\begin{eqnarray}
\label{lem:var-breve:eq2}
&&\big\| \breve{\De}(j, l e_j)\big\|_{L^2 (\O)}^2:=\E \big (|\breve{\De}(j, l e_j)|^2\big )\nonumber\\
&& =N! \int_{\overline{\D}_{j, l e_j}}\Big |\int_{d_{j,l e_j}}^{d_{j,l e_j+1}} \prod_{p=1}^N(s-x_p)_+^{H-3/2}\,ds\Big |^2\, dx_1\ldots d x_N\nonumber\\
&& \le N\cdot N! \int_{-\infty}^{d_{j,(l-1)e_j+1}}\bigg (\nonumber\\
&& \hspace{1cm}\int_{\R^{N-1}}\Big |\int_{d_{j,l e_j}}^{d_{j,l e_j+1}} (s-x_N)^{H-3/2}\prod_{p=1}^{N-1}(s-x_p)_+^{H-3/2}\,ds\Big |^2\, dx_1\ldots d x_{N-1}\bigg)\, dx_N \nonumber\\
&& \le N\cdot N! \int_{-\infty}^{d_{j,(l-1)e_j+1}}(d_{j,l e_j}-x_N )^{2H-3}\, dx_N\\
&&\hspace{2cm}\times\int_{\R^{N-1}}\Big |\int_{d_{j,l e_j}}^{d_{j,l e_j+1}} \prod_{p=1}^{N-1}(s-x_p)_+^{H-3/2}\,ds\Big |^2\, dx_1\ldots d x_{N-1}\,.\nonumber
\end{eqnarray}
It results from standard computations and (\ref{eq:dya}) that
\begin{eqnarray}
\label{lem:var-breve:eq3} 
&& \int_{-\infty}^{d_{j,(l-1)e_j+1}}(d_{j,l e_j}-x_N )^{2H-3}\, dx_N=\frac{(d_{j,l e_j}-d_{j,(l-1)e_j+1})^{2H-2}}{2-2H}\nonumber\\
&&=\frac{(d_{j,e_j-1})^{2H-2}}{2-2H}= \frac{(e_j-1)^{2H-2}\, 2^{2j (1-H)}}{2-2H} \le \frac{e_j^{2H-2}\, 2^{-2(j+1) (H-1)}}{2-2H}\,.
\end{eqnarray}
Moreover, the "isometry" property of multiple Wiener integral, (\ref{eq:def-her}) and (\ref{eq:var-X}) imply that 
\begin{eqnarray}
\label{lem:var-breve:eq4} 
&& (N-1)! \int_{\R^{N-1}}\Big |\int_{d_{j,l e_j}}^{d_{j,l e_j+1}} \prod_{p=1}^{N-1}(s-x_p)_+^{H-3/2}\,ds\Big |^2\, dx_1\ldots d x_{N-1}\\
&&=\E \Big (\big |X^{N-1,H}(d_{j,l e_j+1})-X^{N-1,H}(d_{j,l e_j})\big |^2\Big)=c_{N-1,H}\, 2^{-2j((N-1)(H-1)+1)}\,. \nonumber
\end{eqnarray}
Then combining (\ref{lem:var-breve:eq2}), (\ref{lem:var-breve:eq3}) and (\ref{lem:var-breve:eq4}), one obtains (\ref{lem:var-breve:eq1}).
\end{proof}

\begin{lemma}
\label{lem:maj-breve}
There are $\breve{\O}$ an event of probability~$1$ and $\breve{C}$ a positive finite random variable such that on $\breve{\O}$, for every integer $j\ge \jb$, one has
\begin{equation}
\label{lem:maj-breve:eq1}
\sup_{l\in\L^j} \big | \breve{\De}(j, l e_j)\big |\le \breve{C}\, 2^{-j(N(H-1)+1)}\,e_j^{H-1}\, j^{N/2}\,.
\end{equation}
\end{lemma}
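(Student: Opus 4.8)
The plan is to upgrade the uniform $L^2$ estimate of Lemma~\ref{lem:var-breve} into a uniform almost sure estimate, the passage from the former to the latter being exactly what produces the extra factor $j^{N/2}$. The three ingredients are hypercontractivity in the Wiener chaos of order $N$, a union bound over $l\in\L^j$, and the Borel--Cantelli lemma. Throughout I set $\si_j:=c\,2^{-j(N(H-1)+1)}\,e_j^{H-1}$, where $c$ is the constant of Lemma~\ref{lem:var-breve}, so that $\sup_{l\in\L^j}\|\breve{\De}(j,le_j)\|_{L^2(\O)}\le\si_j$ for every $j\ge\jb$.

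First I would recall that, for each $l$, the random variable $\breve{\De}(j,le_j)$ belongs to the homogeneous Wiener chaos of order $N$, being a multiple Wiener integral of order $N$ (see (\ref{eq:breve-de})). Hence the hypercontractivity property of random variables living in a fixed Wiener chaos applies (see e.g. \cite{janson1997gaussian,nualart}): there is a constant depending only on $N$ such that $\|\breve{\De}(j,le_j)\|_{L^p(\O)}\le (p-1)^{N/2}\,\|\breve{\De}(j,le_j)\|_{L^2(\O)}$ for every real $p\ge 2$. Optimising the resulting Markov bound $\P(|\breve{\De}(j,le_j)|\ge\la)\le\la^{-p}\,\big((p-1)^{N/2}\si_j\big)^p$ over $p$ yields a sub-Gaussian-type tail estimate of the form
\begin{equation}
\label{lem:maj-breve:tail}
\P\big(|\breve{\De}(j,le_j)|\ge\la\big)\le a_N\,\exp\Big(-b_N\,(\la/\si_j)^{2/N}\Big),\qquad\la>0,
\end{equation}
with $a_N,b_N>0$ depending only on $N$, valid uniformly in $l\in\L^j$ and $j\ge\jb$.

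Next I would apply (\ref{lem:maj-breve:tail}) with the threshold $\la_j:=K\,\si_j\,j^{N/2}$, where $K>0$ is a large constant to be fixed, which gives $\P(|\breve{\De}(j,le_j)|\ge\la_j)\le a_N\exp(-b_N K^{2/N} j)$ for every $l$. A union bound over $\L^j$ combined with the cardinality estimate (\ref{eq:card-setL}), namely $\car(\L^j)\le c\,2^j/e_j\le c\,2^j$, then produces
\begin{equation}
\label{lem:maj-breve:union}
\P\Big(\sup_{l\in\L^j}|\breve{\De}(j,le_j)|\ge\la_j\Big)\le c\,a_N\,2^j\,\exp\big(-b_N K^{2/N} j\big).
\end{equation}
The crucial choice is now to take $K$ so large that $b_N K^{2/N}>\log 2$; with such a $K$ the right-hand side of (\ref{lem:maj-breve:union}) is bounded by a summable geometric sequence in $j$. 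The Borel--Cantelli lemma therefore provides an event $\breve{\O}$ of probability $1$ (which does not depend on $\tau$) and an almost surely finite integer $j_0=j_0(\o)\ge\jb$ such that, on $\breve{\O}$ and for all $j\ge j_0$, one has $\sup_{l\in\L^j}|\breve{\De}(j,le_j)|<\la_j=Kc\,2^{-j(N(H-1)+1)}\,e_j^{H-1}\,j^{N/2}$. To get the stated bound for every $j\ge\jb$, I would absorb the finitely many remaining indices $\jb\le j<j_0$ into the random constant by setting $\breve{C}:=\max\big\{Kc,\ \max_{\jb\le j<j_0}\sup_{l\in\L^j}|\breve{\De}(j,le_j)|\,2^{j(N(H-1)+1)}\,e_j^{1-H}\,j^{-N/2}\big\}$, which is almost surely finite, being a maximum of finitely many almost surely finite quantities.

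The main obstacle, and really the heart of the argument, is the balancing in (\ref{lem:maj-breve:union}): the exponential growth $2^j$ coming from $\car(\L^j)$ must be beaten by the tail decay. This is made possible precisely by the choice of threshold $\la_j\propto\si_j\,j^{N/2}$, since raising $j^{N/2}$ to the chaos exponent $2/N$ converts it into the linear factor $j$ in the exponent of (\ref{lem:maj-breve:tail}), after which enlarging $K$ suffices. One should also check that the constants $a_N,b_N$ from hypercontractivity and the constant $c$ from Lemma~\ref{lem:var-breve} depend only on $(N,H)$, so that both $\breve{C}$ and $\breve{\O}$ are genuinely universal and independent of $\tau$, as required for the subsequent derivation of Theorem~\ref{thm:main}.
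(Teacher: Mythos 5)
Your proposal is correct and follows essentially the same route as the paper: the paper's own proof applies Janson's tail bound for order-$N$ chaos (its Lemma~\ref{lem:Jan}, i.e.\ Theorem~6.7 of \cite{janson1997gaussian}) with the threshold $\kappa\, j^{N/2}\sup_{l\in\L^j}\|\breve{\De}(j,le_j)\|_{L^2(\O)}$, uses the union bound with the cardinality estimate (\ref{eq:card-setL}), chooses $\kappa$ large enough that the exponential decay beats the $2^j$ growth, and concludes by Borel--Cantelli together with Lemma~\ref{lem:var-breve}. The only cosmetic difference is that you rederive the sub-exponential tail estimate from hypercontractivity and an optimised Markov inequality, whereas the paper simply cites it; your explicit absorption of the indices $\jb\le j<j_0(\o)$ into the random constant $\breve{C}$ is also exactly what the paper does implicitly.
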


In order to show that Lemma~\ref{lem:maj-breve} holds one needs the following lemma which is in fact Theorem~6.7 on page 82 of the well-known book~\cite{janson1997gaussian}.
\begin{lemma}
\label{lem:Jan}
For any fixed integer $N\ge 1$, there exists a (strictly) positive finite universal deterministic constant $\breve{c}_N$ such that, for every random variable $\chi$ belonging to the Wiener chaos of order $N$ and for each real number $y\ge 2$, one has
\begin{equation}
\label{lem:Jan:eq1}
\P\Big (|\chi|>y\|\chi\|_{L^2(\O)}\Big)\le \exp\Big (-\breve{c}_N\,y^{2/N}\Big)\, ,
\end{equation}
where $\|\chi\|_{L^2(\O)}:=\Big (\E\big [|\chi|^2\big]\Big)^{1/2}$.
\end{lemma}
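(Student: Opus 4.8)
The statement is the standard Gaussian-chaos tail bound, and the route I would take is the classical one: reduce it, via \emph{hypercontractivity}, to a uniform control of the moments of $\chi$ by its $L^2(\O)$ norm, and then optimize a Markov inequality over the moment order. The single deep input is the hypercontractive moment estimate: for any $\chi$ in the Wiener chaos of order $N$ and any real $p\ge 2$,
\[
\|\chi\|_{L^p(\O)}\le (p-1)^{N/2}\,\|\chi\|_{L^2(\O)}.
\]
This is a consequence of Nelson's hypercontractivity theorem for the Ornstein--Uhlenbeck semigroup $(T_t)_{t\ge 0}$: the operator $T_t$ is bounded from $L^2(\O)$ into $L^p(\O)$ as soon as $e^{2t}\ge p-1$, while $\chi$, living in the $N$-th chaos, is an eigenfunction, $T_t\chi=e^{-Nt}\chi$; choosing $t$ with $e^{2t}=p-1$ combines these two facts into the displayed inequality. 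Since this is exactly where the real difficulty lies, I would invoke it as a known result (as the authors do) rather than reprove it from the log-Sobolev inequality.

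By homogeneity it suffices to treat the case $\|\chi\|_{L^2(\O)}=1$, applying the conclusion to $\chi/\|\chi\|_{L^2(\O)}$ in general. For any $p\ge 2$, Markov's inequality applied to $|\chi|^p$ together with the moment bound gives
\[
\P\big(|\chi|>y\big)\le y^{-p}\,\E\big[|\chi|^p\big]=y^{-p}\,\|\chi\|_{L^p(\O)}^p\le \big(p^{N/2}/y\big)^{p}.
\]
Writing the right-hand side as $\exp\big(\tfrac{N}{2}\,p\log p-p\log y\big)$ and minimizing $f(p)=\tfrac{N}{2}p\log p-p\log y$ over $p$, the critical point is $p^\ast=e^{-1}y^{2/N}$, at which $f(p^\ast)=-\tfrac{N}{2e}\,y^{2/N}$. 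Hence, as long as $p^\ast\ge 2$, i.e. $y\ge (2e)^{N/2}$, one obtains the desired bound with constant $N/(2e)$.

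It remains to cover the range $2\le y<(2e)^{N/2}$, where the unconstrained optimum falls below $2$. Here I would simply take $p=2$, which yields $\P(|\chi|>y)\le y^{-2}$; since $y\mapsto 2(\log y)/y^{2/N}$ is continuous and strictly positive on the compact interval $[2,(2e)^{N/2}]$, it attains a positive minimum $m_N$ there, and then $y^{-2}\le \exp(-m_N\,y^{2/N})$ on that interval. Setting $\breve{c}_N:=\min\{N/(2e),\,m_N\}$ then makes the bound valid for every $y\ge 2$. The only genuinely hard ingredient is the moment inequality of the first step; everything afterwards is an elementary optimization, the sole point requiring care being the matching of the constant across the two ranges of $y$.
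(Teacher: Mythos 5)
Your proof is correct. Note first that the paper does not actually prove this lemma: it is stated as being ``in fact Theorem~6.7 on page~82'' of Janson's book, and the paper simply cites it. Your argument is, in essence, the standard textbook proof behind that citation: the hypercontractive moment estimate $\|\chi\|_{L^p(\O)}\le (p-1)^{N/2}\|\chi\|_{L^2(\O)}$ for the Ornstein--Uhlenbeck semigroup (using that an element of the $N$-th homogeneous chaos is an eigenfunction of $T_t$ with eigenvalue $e^{-Nt}$), followed by Markov's inequality applied to $|\chi|^p$ and optimization in the real parameter $p\ge 2$. The computation of the critical point $p^\ast=e^{-1}y^{2/N}$ and the value $f(p^\ast)=-\tfrac{N}{2e}\,y^{2/N}$ is right, and you correctly observe that hypercontractivity forces $p\ge 2$, so the unconstrained optimum is only admissible for $y\ge (2e)^{N/2}$; your patch on the compact range $2\le y<(2e)^{N/2}$ (take $p=2$, then absorb $y^{-2}\le \exp(-m_N\,y^{2/N})$ via the positive minimum $m_N$ of $y\mapsto 2(\log y)/y^{2/N}$ there) is exactly the kind of constant-matching needed to get a single universal $\breve{c}_N$ valid for all $y\ge 2$, which is what the paper's applications (e.g.\ the Borel--Cantelli estimate in Lemma~\ref{lem:maj-breve}) require. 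Two trivial points worth a half-line each in a written version: the degenerate case $\|\chi\|_{L^2(\O)}=0$ (then $\chi=0$ a.s.\ and the inequality is vacuous, so the normalization $\|\chi\|_{L^2(\O)}=1$ is legitimate), and the silent replacement of $(p-1)^{N/2}$ by $p^{N/2}$ before optimizing, which only weakens the bound and is harmless. Since the paper's processes are multiple Wiener integrals, your eigenfunction argument for the homogeneous chaos suffices; the same conclusion for polynomial chaos of degree $\le N$ would follow by the analogous computation with $T_t X=\sum_k e^{-kt}X_k$, but that generality is not needed here.
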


\begin{proof}{of Lemma~\ref{lem:maj-breve}}  Let $\kappa\geq 2$ be a finite deterministic constant which will be soon defined more precisely. For all integer $j\ge\jb$, one has
\begin{eqnarray}
\label{lem:maj-breve:eq2}
&& \P\bigg(\sup_{l\in\L^j} \big | \breve{\De}(j, l e_j)\big |>\kappa\,j^{N/2}\,\sup_{l\in\L^j} \big \| \breve{\De}(j, l e_j)\big \|_{L^2(\O)}\bigg) \nonumber\\
&& \le \sum_{l\in\L^j}\P\bigg(\big | \breve{\De}(j, l e_j)\big | >\kappa\,j^{N/2}\,\sup_{l\in\L^j} \big \| \breve{\De}(j, l e_j)\big \|_{L^2(\O)}\bigg)\nonumber\\
&&\le \sum_{l\in\L^j}\P\bigg(\big | \breve{\De}(j, l e_j)\big | >\kappa\,j^{N/2}\, \big \| \breve{\De}(j, l e_j)\big \|_{L^2(\O)}\bigg) \, .
\end{eqnarray}
Moreover, one can make use of Lemma~\ref{lem:Jan} in order to bound from above the probabilities in (\ref{lem:maj-breve:eq2}). By this way, for all $l\in\L^j$, one gets
\begin{equation}
\label{lem:maj-breve:eq3}
\P\bigg(\big | \breve{\De}(j, l e_j)\big | >\kappa\,j^{N/2}\, \big \| \breve{\De}(j, l e_j)\big \|_{L^2(\O)}\bigg) \le \exp\big (-\breve{c}_N\,\kappa\,j\big),
\end{equation}
where $\breve{c}_N$ is the same deterministic positive finite constant as in (\ref{lem:Jan:eq1}). Next combining (\ref{lem:maj-breve:eq2}) and (\ref{lem:maj-breve:eq3}) with (\ref{eq:card-setL}) and the inequality $e_j\ge 2$, one obtains that
\begin{eqnarray}
\label{lem:maj-breve:eq4}
&& \P\bigg(\sup_{l\in\L^j} \big | \breve{\De}(j, l e_j)\big |>\kappa\,j^{N/2}\,\sup_{l\in\L^j} \big \| \breve{\De}(j, l e_j)\big \|_{L^2(\O)}\bigg)\nonumber\\
&& \le c\, 2^j e_j^{-1}\,\exp\big (-\breve{c}_N\,\kappa\,j\big)=c\,\exp\big (-(\breve{c}_N\,\kappa-\log 2)j\big),
\end{eqnarray}
where $c$ denotes the same constant as in (\ref{eq:card-setL}). One can assume that the finite constant $\kappa$ is chosen such that $\kappa>(\log 2)/\breve{c}_N$. Then, one can derive from (\ref{lem:maj-breve:eq4}) that
\[
\sum_{j=\jb}^{+\infty} \P\bigg(\sup_{l\in\L^j} \big | \breve{\De}(j, l e_j)\big |>\kappa\,j^{N/2}\,\sup_{l\in\L^j} \big \| \breve{\De}(j, l e_j)\big \|_{L^2(\O)}\bigg)< +\infty.
\]
Thus, the Borel-Cantelli's Lemma implies that there exist $\breve{\Omega}$ an event of probability~$1$ and $\breve{C}_0$ a positive finite random variable,
such that, for all $\o\in\breve{\Omega}$ and for every integer $j\ge\jb$, one has
\begin{equation}
\label{lem:maj-breve:eq5}
\sup_{l\in\L^j} \big | \breve{\De}(j, l e_j,\o)\big|\le \breve{C}_0(\o) \,j^{N/2}\,\sup_{l\in\L^j} \big \| \breve{\De}(j, l e_j)\big \|_{L^2(\O)}
\end{equation}
Finally, (\ref{lem:maj-breve:eq1}) results from (\ref{lem:var-breve:eq1}) and (\ref{lem:maj-breve:eq5}).
\end{proof} 

The following lemma is a straightforward consequence of Theorem 6.9 and Remark~6.10 on page 82 of the well-known book~\cite{janson1997gaussian}.

\begin{lemma}
\label{lem2:Jan}
For any fixed integer $N\ge 1$, there exists a universal deterministic constant $\ga_N$ satisfying
\begin{equation}
\label{lem2:Jan:eq0}
0\le \ga_N <1\,,
\end{equation}
such that, for each random variable $\chi$ belonging to the Wiener chaos of order $N$ one has
\begin{equation}
\label{lem2:Jan:eq1}
\P\Big (|\chi| < 2^{-1}\|\chi\|_{L^2(\O)}\Big)\le \ga_N\, ,
\end{equation}
where $\|\chi\|_{L^2(\O)}:=\Big (\E\big [|\chi|^2\big]\Big)^{1/2}$.
\end{lemma}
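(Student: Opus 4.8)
The plan is to recognize this as a routine anti-concentration statement about hypercontractive random variables, and to deduce it directly from the tail-bound machinery in Janson's book. The statement asserts that for a random variable $\chi$ in the $N$th Wiener chaos, the probability that $|\chi|$ falls below half its $L^2$ norm is bounded by a universal constant $\ga_N<1$. The key structural fact is that all $L^p$ norms of $\chi$ are comparable by hypercontractivity (the equivalence of moments on a fixed chaos), so the distribution of the normalized variable $\chi/\|\chi\|_{L^2(\O)}$ cannot concentrate too strongly near zero. I would first normalize, setting $\xi:=\chi/\|\chi\|_{L^2(\O)}$, so that $\|\xi\|_{L^2(\O)}=1$, and reduce the claim to showing $\P\big(|\xi|<2^{-1}\big)\le\ga_N$ for some universal $\ga_N$ strictly less than $1$.

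\textbf{Main line of argument.} Since the excerpt tells me this lemma follows from Theorem~6.9 and Remark~6.10 of \cite{janson1997gaussian}, I would invoke the \emph{lower} tail / small-ball estimate for chaos variables provided there: a variable in a fixed chaos of order $N$ with unit $L^2$ norm satisfies a universal bound of the form $\P\big(|\xi|<\lambda\big)\le C_N\,\lambda^{1/N}$ (or an analogous explicit estimate with a universal constant), reflecting that such variables have no atom at $0$ and controlled concentration. Applying this with $\lambda=2^{-1}$ produces a bound $\P\big(|\xi|<2^{-1}\big)\le C_N\,2^{-1/N}$. The only remaining point is to check that this quantity is strictly less than $1$; if the raw constant from the cited estimate is not automatically below $1$, I would instead argue via the reverse: by hypercontractivity $\E\big[\xi^4\big]\le K_N$ for a universal $K_N$ (the fourth-moment bound on the $N$th chaos), and then a Paley–Zygmund type inequality applied to $\xi^2$ gives $\P\big(\xi^2\ge t\big)\ge (1-t)^2/K_N$ for $t\in(0,1)$. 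Taking $t=1/4$ yields $\P\big(|\xi|\ge 2^{-1}\big)\ge (3/4)^2/K_N>0$, whence $\ga_N:=1-(9/16)K_N^{-1}\in[0,1)$, which is exactly the desired universal constant.

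\textbf{Where the obstacle lies.} The conceptual content is entirely in the cited results, so the proof is short; the one genuine subtlety is guaranteeing the \emph{strict} inequality $\ga_N<1$ uniformly, i.e.\ ruling out that the mass near zero can approach $1$ as the chaos order grows or as $\chi$ varies over the whole chaos. This is precisely what hypercontractivity buys: the fourth moment $\E\big[\xi^4\big]$ is bounded by a constant depending only on $N$ (for Wiener chaos of order $N$ one has the explicit comparison $\|\xi\|_{L^4}\le 3^{N/2}\|\xi\|_{L^2}$, so $K_N=9^{N}$ works), and the Paley–Zygmund inequality then delivers a quantitative lower bound on $\P\big(|\xi|\ge 2^{-1}\big)$ that is bounded away from $0$. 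Thus the hard part is not any computation but the correct \emph{order of quantifiers}: the constant $\ga_N$ must be valid simultaneously for all $\chi$ in the chaos, and the uniformity follows because the moment-comparison constant depends only on $N$. Collecting these pieces gives \eqref{lem2:Jan:eq1} with a universal $\ga_N$ satisfying \eqref{lem2:Jan:eq0}.
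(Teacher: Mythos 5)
Your proof is correct and takes essentially the same route as the paper: the paper's entire proof is the citation of Theorem~6.9 and Remark~6.10 in \cite{janson1997gaussian}, and your hypercontractivity bound $\E[\xi^4]\le 9^N$ combined with Paley--Zygmund applied to $\xi^2$ (giving $\P(|\xi|\ge 2^{-1})\ge \tfrac{9}{16}\,9^{-N}$, hence $\gamma_N=1-\tfrac{9}{16}\,9^{-N}<1$) is precisely the standard proof behind that citation. The preliminary detour through a small-ball estimate of the form $C_N\lambda^{1/N}$ is unnecessary, but harmless, since you correctly identify its constant problem and fall back on the self-contained Paley--Zygmund argument, which settles the uniformity in $\chi$ exactly as required.
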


\begin{remark}
\label{rem:parti}
Observe that, in view of the fact that the nonnegative constant $\ga_N$ is {\em strictly} smaller than~$1$ (see  (\ref{lem2:Jan:eq0})), there exists an integer $n_0\ge 2$, only depending on $N$, such that
\begin{equation}
\label{rem:parti:eq1}
0\le 2\ga_N^{n_0}<1\,.
\end{equation}
Also observe that the second equality in (\ref{thm:main:eq2}) and (\ref{eq:ej}) imply that there is a positive integer $\jbu\ge\jb$ such that, for all integer $j\ge\jbu$ one has
\begin{equation}
\label{rem:parti:eq2} 
\diam \big (\big [1, (2^j/e_j)-1\big ]\big):=(2^j/e_j)-1-1=(2^j/e_j)-2\ge 10\,n_0 j\,.
\end{equation}
For each fixed integer $j\ge \jbu$, the integer $M_j\ge 10$ denotes the integer part of $(n_0 j)^{-1}\big ((2^j/e_j)-2\big)$, that is 
\begin{equation}
\label{rem:parti:eq3} 
M_j:= \li (n_0 j)^{-1}\big ((2^j/e_j)-2\big) \ri\,.
\end{equation}
Also, for each fixed integer $j\ge \jbu$, one denotes by $(U_m ^j)_{m\in\{0,1,\ldots, M_j\}}$ subdivision of the interval $\big [1, (2^j/e_j)-1\big ]$ by the $M_j+1$ points defined as:
\begin{equation}
\label{rem:parti:eq4} 
U_{M_j}^j:=(2^j/e_j)-1\quad\mbox{and} \quad U_m ^j:=1+m(n_0 j)\,,\,\mbox{for all $m\in\{0,1,\ldots, M_j-1\}$;}
\end{equation}
notice that 
\begin{equation}
\label{rem:parti:eq5}
n_0 j\le U_{M_j}^j  - U_{M_j-1}^j< 2(n_0 j)\,.
\end{equation}
Moreover, for every fixed integer $j\ge \jbu$, one lets $(\wt{\La}_m ^j)_{m\in\{1,\ldots, M_j\}}$ be the sequence of the nonnegative finite random variables defined,  for all $m\in \{1,\ldots, M_j\}$, as:
\begin{equation}
\label{rem:parti:eq6}
\wt{\La}_m ^j:=\sup\Big\{\big|\wt{\De}(j, l e_j)\big|\,:\, l\in\L_m ^j \Big\}\,,
\end{equation}
where $\wt{\De}(j, l e_j)$ is as in (\ref{eq:tilde-de}) and 
\begin{equation}
\label{rem:parti:eq7}
\L_m ^j:=\N\cap \big [U_{m-1}^j, U_m^j\big]\,.
\end{equation}
Observe that, in view of (\ref{eq:setL}), (\ref{rem:parti:eq4}), (\ref{rem:parti:eq5}) and  (\ref{rem:parti:eq7}), one has
 \begin{equation}
\label{rem:parti:eq8}
\L^j=\bigcup_{m=1}^{M_j}\L_m ^j
\end{equation}
and
\begin{equation}
\label{rem:parti:eq9}
\car (\L_m ^j) > n_0 j\,, \quad\mbox{for all $m\in\{1,\ldots, M_j\}$.}
\end{equation}
\end{remark}

\begin{lemma}
\label{lem:maj-tilde}
One denotes by $\wt{c}$ the same (strictly) positive deterministic constant as in (\ref{lem:var-tilde:eq1}). Then, there is $\wt{\O}$ an event of probability~$1$ such that on $\wt{\O}$ one has
\begin{equation}
\label{lem:maj-tilde:eq1}
\liminf_{j\rightarrow +\infty} \Big\{ 2^{j(N(H-1)+1)} \inf_{1\le m \le M_j}\wt{\La}_m ^j\Big\}\ge 2^{-1}\,\wt{c}>0\,. 
\end{equation}
\end{lemma}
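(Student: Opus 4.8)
The plan is to estimate, for each large $j$, the probability that the infimum $\inf_{1\le m\le M_j}\wt{\La}_m^j$ falls below the target threshold $2^{-1}\wt{c}\,2^{-j(N(H-1)+1)}$, and then to conclude by the Borel--Cantelli lemma. Fix an integer $j\ge\jbu$ and an index $m\in\{1,\ldots,M_j\}$. By the definition (\ref{rem:parti:eq6}) of $\wt{\La}_m^j$ as a supremum over $l\in\L_m^j$, the event $\{\wt{\La}_m^j<2^{-1}\wt{c}\,2^{-j(N(H-1)+1)}\}$ coincides with the intersection, over all $l\in\L_m^j$, of the events $\{|\wt{\De}(j,le_j)|<2^{-1}\wt{c}\,2^{-j(N(H-1)+1)}\}$.

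First I would compare the deterministic threshold with the $L^2(\O)$ norm. Since Lemma~\ref{lem:var-tilde} gives $\|\wt{\De}(j,le_j)\|_{L^2(\O)}\ge\wt{c}\,2^{-j(N(H-1)+1)}$, one has $2^{-1}\wt{c}\,2^{-j(N(H-1)+1)}\le 2^{-1}\|\wt{\De}(j,le_j)\|_{L^2(\O)}$, so each single-index event is contained in $\{|\wt{\De}(j,le_j)|<2^{-1}\|\wt{\De}(j,le_j)\|_{L^2(\O)}\}$. Because $\wt{\De}(j,le_j)$ belongs to the Wiener chaos of order $N$, Lemma~\ref{lem2:Jan} bounds the probability of this last event by $\ga_N$.

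Next I would exploit independence together with the cardinality bound. By Lemma~\ref{lem:inde} the variables $\wt{\De}(j,le_j)$, $l\in\L_m^j\subset\L^j$, are independent, so the probability of the above intersection is at most $\ga_N^{\car(\L_m^j)}$; using $0\le\ga_N<1$ with the lower bound $\car(\L_m^j)>n_0 j$ from (\ref{rem:parti:eq9}) this yields
\[
\P\Big(\wt{\La}_m^j<2^{-1}\wt{c}\,2^{-j(N(H-1)+1)}\Big)\le\ga_N^{n_0 j}=\big(\ga_N^{n_0}\big)^j.
\]
A union bound over $m\in\{1,\ldots,M_j\}$, combined with the crude estimate $M_j\le 2^j$ coming from (\ref{rem:parti:eq3}) and $e_j\ge 2$, then gives
\[
\P\Big(\inf_{1\le m\le M_j}\wt{\La}_m^j<2^{-1}\wt{c}\,2^{-j(N(H-1)+1)}\Big)\le M_j\big(\ga_N^{n_0}\big)^j\le\big(2\ga_N^{n_0}\big)^j.
\]

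Finally, the decisive point is the calibration of $n_0$ in (\ref{rem:parti:eq1}), which guarantees $0\le 2\ga_N^{n_0}<1$; hence the right-hand side above is the general term of a convergent geometric series. The Borel--Cantelli lemma then produces an event $\wt{\O}$ of probability~$1$ on which, for all sufficiently large $j$, one has $\inf_{1\le m\le M_j}\wt{\La}_m^j\ge 2^{-1}\wt{c}\,2^{-j(N(H-1)+1)}$, that is $2^{j(N(H-1)+1)}\inf_{1\le m\le M_j}\wt{\La}_m^j\ge 2^{-1}\wt{c}$, which is exactly (\ref{lem:maj-tilde:eq1}). The assembly itself is routine: the genuine work has already been carried out upstream, namely the \emph{independence} of the $\wt{\De}(j,le_j)$ (Lemma~\ref{lem:inde}) and the interplay between the uniform small-ball estimate (Lemma~\ref{lem2:Jan}) and the variance lower bound (Lemma~\ref{lem:var-tilde}). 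The only mild subtleties to watch are the threshold comparison in the second step and the bookkeeping ensuring that $M_j\big(\ga_N^{n_0}\big)^j$ stays summable, which is precisely the reason $n_0$ was chosen so that $2\ga_N^{n_0}<1$.
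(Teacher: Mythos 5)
Your proposal is correct and follows essentially the same route as the paper's own proof: the threshold comparison via Lemma~\ref{lem:var-tilde}, the small-ball bound $\ga_N$ from Lemma~\ref{lem2:Jan}, the product over $l\in\L_m^j$ using the independence from Lemma~\ref{lem:inde} with $\car(\L_m^j)>n_0 j$, the union bound with $M_j<2^j$ giving the summable term $\big(2\ga_N^{n_0}\big)^j$, and Borel--Cantelli. No gaps.
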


\begin{proof}{of Lemma~\ref{lem:maj-tilde}} Let $\wt{\O}$ be the event defined as:
\begin{equation}
\label{lem:maj-tilde:eq2}
\wt{\O}:=\bigcup_{J=\jbu}^{+\infty}\bigcap_{j=J}^{+\infty} \bigcap_{m=1}^{M_j} \big\{\o\in\O\,:\, \wt{\La}_m ^j (\o)\ge 2^{-1}\,\wt{c}\,2^{-j(N(H-1)+1)}\big\}\,,
\end{equation}
where the fixed positive integer $\jbu$ is as in Remark~\ref{rem:parti}. In order to show that the lemma holds, it is enough to prove $\P(\wt{\O})=1$ which is equivalent to prove that
\begin{equation}
\label{lem:maj-tilde:eq3}
\P(\O\setminus\wt{\O})=0\,.
\end{equation}
Notice that, in view of (\ref{lem:maj-tilde:eq2}) the event $\O\setminus\wt{\O}$ can be expressed as:
\[
\O\setminus\wt{\O}:=\bigcap_{J=\jbu}^{+\infty}\bigcup_{j=J}^{+\infty} \bigcup_{m=1}^{M_j} \big\{\o\in\O\,:\, \wt{\La}_m ^j (\o) < 2^{-1}\,\wt{c}\,2^{-j(N(H-1)+1)}\big\}\,.
\]
Thus, one knows from the Borel-Cantelli's Lemma that in order to derive (\ref{lem:maj-tilde:eq3}) it is enough to prove that
\begin{equation}
\label{lem:maj-tilde:eq4}
\sum_{j=\jbu}^{+\infty}\P\Big (\bigcup_{m=1}^{M_j} \big\{\o\in\O\,:\, \wt{\La}_m ^j (\o) < 2^{-1}\,\wt{c}\,2^{-j(N(H-1)+1)}\big\}\Big)<+\infty\,.
\end{equation}
Putting together (\ref{rem:parti:eq6}), Lemma~\ref{lem:inde}, (\ref{rem:parti:eq8}), Lemma~\ref{lem:var-tilde}, Lemma~\ref{lem2:Jan}, (\ref{rem:parti:eq9}), (\ref{rem:parti:eq3}) and the inequality $e_j\ge 2$, one gets, for every $j\ge\jbu$,
\begin{eqnarray}
\label{lem:maj-tilde:eq5}
&& \P\Big (\bigcup_{m=1}^{M_j} \big\{\o\in\O\,:\, \wt{\La}_m ^j (\o) < 2^{-1}\,\wt{c}\,2^{-j(N(H-1)+1)}\big\}\Big)\ \le \sum_{m=1}^{M_j} \P\Big (\wt{\La}_m ^j <2^{-1}\,\wt{c}\,2^{-j(N(H-1)+1)}\Big)\nonumber\\
&& =\sum_{m=1}^{M_j}\P\bigg (\bigcap_{l\in\L_m^j}\Big\{\o\in\O\,:\, \big | \wt{\De}(j, l e_j,\o)\big |<2^{-1}\,\wt{c}\,2^{-j(N(H-1)+1)}\Big\}\bigg)\nonumber\\
&&=\sum_{m=1}^{M_j}\,\prod_{l\in\L_m^j}\P\Big (\big | \wt{\De}(j, l e_j)\big |<2^{-1}\,\wt{c}\,2^{-j(N(H-1)+1)}\Big) < M_j \,\ga_N ^{n_0 j} < \big (2\ga^{n_0}\big)^j\,.
\end{eqnarray}
Thus, (\ref{lem:maj-tilde:eq4}) follows from (\ref{rem:parti:eq1}) and (\ref{lem:maj-tilde:eq5}). 
\end{proof}

\begin{lemma}
\label{lem:min-de}
Using some of the notations previously introduced in Remark~\ref{rem:parti}, for every fixed integer $j\ge \jbu$, one lets $(\La_m ^j)_{m\in\{1,\ldots, M_j\}}$ be the sequence of the nonnegative finite random variables defined, for all $m\in \{1,\ldots, M_j\}$, as:
\begin{equation}
\label{lem:min-de:eq1}
\La_m ^j:=\sup\Big\{\big|\De(j, l e_j)\big|\,:\, l\in\L_m ^j \Big\}\,,
\end{equation}
where $\De(j, l e_j)$ is as in (\ref{eq:decom-de}). Moreover, one denotes by $\O^*$ the event of probability~$1$ defined as $\O^*:=\breve{\O}\cap\widetilde{\O}$ (see Lemmas~\ref{lem:maj-breve} and \ref{lem:maj-tilde}).
Then, one has on $\O^*$
\begin{equation}
\label{lem:min-de:eq2}
\liminf_{j\rightarrow +\infty} \Big\{ 2^{j(N(H-1)+1)} \inf_{1\le m \le M_j} \La_m ^j\Big\}\ge 2^{-1}\,\wt{c}>0\,. 
\end{equation}
where $\wt{c}$ is the same strictly positive deterministic constant as in (\ref{lem:var-tilde:eq1}).
\end{lemma}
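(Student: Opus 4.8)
The plan is to deduce the lower bound for the full increments $\De(j, le_j)$ from the lower bound already obtained for their principal parts $\wt{\De}(j, le_j)$ in Lemma~\ref{lem:maj-tilde}, by showing that after rescaling the remainder parts $\breve{\De}(j, le_j)$ controlled in Lemma~\ref{lem:maj-breve} contribute nothing. In short, everything reduces to transferring the estimate~(\ref{lem:maj-tilde:eq1}) from $\wt{\La}_m^j$ to $\La_m^j$ through the decomposition~(\ref{eq:decom-de}).

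First I would fix $\o\in\O^*=\breve{\O}\cap\wt{\O}$ and an integer $j\ge\jbu$. From (\ref{eq:decom-de}) and the triangle inequality, for each $l\in\L^j$ one has $\big|\De(j, le_j)\big|\ge\big|\wt{\De}(j, le_j)\big|-\big|\breve{\De}(j, le_j)\big|$. Taking the supremum over $l\in\L_m^j$ and invoking the elementary inequality $\sup_l(a_l-b_l)\ge\sup_l a_l-\sup_l b_l$ (valid for any finite families of reals indexed by the same set), together with $\L_m^j\subseteq\L^j$, I obtain
\[
\La_m^j\ge\wt{\La}_m^j-\sup_{l\in\L_m^j}\big|\breve{\De}(j, le_j)\big|\ge\wt{\La}_m^j-\sup_{l\in\L^j}\big|\breve{\De}(j, le_j)\big|\,.
\]
Taking the infimum over $m\in\{1,\ldots,M_j\}$ and multiplying by $2^{j(N(H-1)+1)}$ then yields
\[
2^{j(N(H-1)+1)}\inf_{1\le m\le M_j}\La_m^j\ge 2^{j(N(H-1)+1)}\inf_{1\le m\le M_j}\wt{\La}_m^j-2^{j(N(H-1)+1)}\sup_{l\in\L^j}\big|\breve{\De}(j, le_j)\big|\,.
\]

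The final step is to let $j\rightarrow+\infty$. By Lemma~\ref{lem:maj-tilde} the $\liminf$ of the first term on the right is at least $2^{-1}\wt{c}$ on $\wt{\O}$, so it suffices to check that the remainder term tends to $0$ on $\breve{\O}$. By Lemma~\ref{lem:maj-breve}, on $\breve{\O}$ this term is bounded above by $\breve{C}\,e_j^{H-1}j^{N/2}$, so the whole argument comes down to proving that $e_j^{H-1}j^{N/2}=e_j^{-(1-H)}j^{N/2}\rightarrow 0$. This is precisely where the calibration of the growth of $S$ enters, and is in my view the \emph{only} genuinely delicate point: using $e_j=\li S(j)\ri$ and the second limit in~(\ref{thm:main:eq2}), for any fixed $K>0$ one has $S(j)\ge K\,j^{\frac{N}{2(1-H)}}$ for all large $j$, whence $e_j\ge 2^{-1}K\,j^{\frac{N}{2(1-H)}}$ and therefore $e_j^{-(1-H)}j^{N/2}\le(2/K)^{1-H}$ for all large $j$; since $K>0$ is arbitrary, this forces $e_j^{-(1-H)}j^{N/2}\rightarrow 0$. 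Combining the three ingredients gives, on $\O^*$,
\[
\liminf_{j\rightarrow+\infty}\Big\{2^{j(N(H-1)+1)}\inf_{1\le m\le M_j}\La_m^j\Big\}\ge 2^{-1}\wt{c}-0=2^{-1}\wt{c}>0\,,
\]
which is exactly~(\ref{lem:min-de:eq2}).
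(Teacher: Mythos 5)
Your proof is correct and follows essentially the same route as the paper: the same triangle-inequality transfer from $\La_m^j$ to $\wt{\La}_m^j$ via the decomposition (\ref{eq:decom-de}) together with Lemmas~\ref{lem:maj-tilde} and \ref{lem:maj-breve}, followed by the observation that $e_j^{H-1}\,j^{N/2}\rightarrow 0$, which you in fact justify in more detail than the paper does. Only a trivial labeling slip: the bound $S(j)\ge K\,j^{\frac{N}{2(1-H)}}$ for all large $j$ comes from the condition $\lim_{z\rightarrow+\infty} z^{\frac{N}{2(1-H)}}/S(z)=0$ in (\ref{thm:main:eq2}), i.e.\ the first displayed limit there rather than the second.
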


\begin{proof}{of Lemma~\ref{lem:min-de}} Using (\ref{eq:decom-de}), (\ref{lem:min-de:eq1}), the triangle inequality, (\ref{rem:parti:eq6}), (\ref{rem:parti:eq8}) and Lemma~\ref{lem:maj-breve}, one obtains on the event $\O^*$, for every $j\ge\jbu$,
\begin{eqnarray}
\label{lem:min-de:eq3}
&&2^{j(N(H-1)+1)}\inf_{1\le m \le M_j} \La_m ^j\ge 2^{j(N(H-1)+1)} \inf_{1\le m \le M_j}\Big\{\wt{\La}_m ^j-\sup_{l\in\L_m^j}\big|\breve{\De}(j, l e_j)\big|\Big\}\nonumber\\
&& \ge 2^{j(N(H-1)+1)} \bigg ( \inf_{1\le m \le M_j}\wt{\La}_m ^j-\sup_{l\in\L^j}\big|\breve{\De}(j, l e_j)\big|\bigg)\nonumber\\
&& \ge  \bigg ( 2^{j(N(H-1)+1)} \inf_{1\le m \le M_j}\wt{\La}_m ^j\bigg)-\breve{C}\,e_j^{H-1}\, j^{N/2}.
\end{eqnarray}
Moreover, it follows (\ref{eq:ej}) and the first equality in (\ref{thm:main:eq2}) that
\begin{equation}
\label{lem:min-de:eq4}
\lim_{j\rightarrow +\infty} e_j^{H-1}\, j^{N/2}=0\,.
\end{equation}
Then combining (\ref{lem:min-de:eq3}) and (\ref{lem:min-de:eq4}) with (\ref{lem:maj-tilde:eq1}), one gets (\ref{lem:min-de:eq2}).
\end{proof}

We are now in position to complete the prove Theorem~\ref{thm:main}.\\
{\bf End of the proof of Theorem~\ref{thm:main}} In all the sequel the point $\tau\in (0,1)$ is arbitrary and fixed and, for integer $j\ge \jbu$, one sets
\begin{equation}
\label{thm:main:eq3}
l_j (\tau):=\li 2^j\tau/e_j\ri\,,
\end{equation}
where, as usual, $\li\cdot \ri$ denotes the integer part function. One clearly has that 
\begin{equation}
\label{thm:main:eq4}
0\le \tau-\frac{l_j (\tau)e_j}{2^j}<\frac{e_j}{2^j}\,.
\end{equation}
Moreover, it easily follows from (\ref{thm:main:eq3}), the inequalities $0<\tau<1$, (\ref{eq:setL}), (\ref{eq:ej}) and the second equality in (\ref{thm:main:eq2}) that there exists a positive integer $\jbut\ge\jbu$, such that for all integer $j\ge\jbut$, one has 
\begin{equation}
\label{thm:main:eq5}
l_j (\tau)\in\L^j
\end{equation}
and
\[
\Big [\tau-\frac{12 n_0 j S(j+1)}{2^j}, \tau+\frac{12 n_0 j S(j+1)}{2^j}\Big]\subset (0,1)\,,
\]
where the fixed integer $n_0\ge 2$ is as in (\ref{rem:parti:eq1}). In view of (\ref{thm:main:eq5}) and (\ref{rem:parti:eq8}), there is $m_j (\tau)\in\{1,\ldots, M_j\}$ such that 
\begin{equation}
\label{thm:main:eq6}
l_j (\tau)\in\L_{m_j (\tau)}^j\,.
\end{equation}
Thus, one can derive from (\ref{rem:parti:eq7}), (\ref{rem:parti:eq5}) and (\ref{rem:parti:eq4}) that 
\begin{equation}
\label{thm:main:eq7}
\big | l_j (\tau)-l\big|< 2n_0 j\,,\quad\mbox{for all $l\in\L_{m_j (\tau)}^j$.}
\end{equation}
Next, combining (\ref{thm:main:eq7}), (\ref{thm:main:eq4}), (\ref{eq:ej}) and the inequality $n_0\ge 2$, one gets that 
\begin{equation}
\label{thm:main:eq8}
 \Big |\tau-\frac{l e_j}{2^j}\Big| < \frac{(2n_0 j+1) e_j}{2^j}\le \frac{3n_0 j e_j}{2^j}\le\frac{3n_0 j S(j)}{2^j}\,,\quad\mbox{for all $l\in\L_{m_j (\tau)}^j$,}
 \end{equation}
 and
 \begin{equation}
\label{thm:main:eq9}
 \quad \Big |\tau-\frac{l e_j+1}{2^j}\Big|< \frac{(2n_0 j+1) e_j+1}{2^j}\le \frac{3n_0 j e_j}{2^j}\le\frac{3n_0 j S(j)}{2^j}\,,
 \quad\mbox{for all $l\in\L_{m_j (\tau)}^j$.}
  \end{equation}
Next it follows from (\ref{lem:min-de:eq1}), (\ref{eq:def-delta}), (\ref{eq:dya}), (\ref{eq:def-os}), (\ref{thm:main:eq8}) and (\ref{thm:main:eq9}) that, for all $\o\in\O$,
 \begin{equation}
\label{thm:main:eq10}
\os\Big (X^{N,H}(\o)\,,\tau\,, \frac{3n_0 j S(j)}{2^j}\Big)\ge \La_{m_j (\tau)}^j (\o)\ge \inf_{1\le m \le M_j} \La_m ^j(\o)\,.
\end{equation}
Thus, assuming that $\eta\in \big (0, 2^{-1}\wt{c}\big)$ is arbitrary and fixed, one can derive from (\ref{thm:main:eq10}) and Lemma~\ref{lem:min-de} that, for all $\o\in\O^*$, there exists an integer $j_3=j_3 (\tau,\eta,\o)\ge \jbut$ such that, for all integer $j\ge j_3$, one has 
 \begin{equation}
\label{thm:main:eq11}
2^{j(N(H-1)+1)}\os\Big (X^{N,H}(\o)\,,\tau\,, \frac{3n_0 j S(j)}{2^j}\Big)\ge \eta\,.
\end{equation}
Next, let $\rho$ be an arbitrary positive real number such $\rho\le 2^{-j_3}$, one sets 
\begin{equation}
\label{thm:main:eq12}
j^*(\rho):=\li-\log_2 \rho\ri\,.
\end{equation}
One clearly has that $j^*(\rho)\ge j_3$ and 
\begin{equation}
\label{thm:main:eq13}
2^{j^*(\rho)}\le \rho^{-1}<2^{j^*(\rho)+1}\,.
\end{equation}
Thus, (\ref{thm:main:eq11}), (\ref{thm:main:eq12}), (\ref{thm:main:eq13}), (\ref{eq:def-os}) and the fact that $S$ is an increasing function imply that
\begin{eqnarray}
\label{thm:main:eq14}
&& \rho^{-N(H-1)-1}\,\os\Big (X^{N,H}(\o)\,,\tau\,, 6n_0(-\log_2 \rho)S(-\log_2 \rho) \rho\Big)\nonumber\\
&& \ge 2^{j^* (\rho)(N(H-1)+1)}\os\Big (X^{N,H}(\o)\,,\tau\,, \frac{3n_0 j^*(\rho) S(j^*(\rho))}{2^{j^*(\rho)}}\Big)\ge \eta\,.
\end{eqnarray}
Next, let $r$ be an arbitrary positive real number such that $r\le 2^{-j_3}$. Using the latter inequality, the inequality $n_0\ge 2$, and the fact the function $S$ is with values in $[2,+\infty)$ (see the inequality in (\ref{thm:main:eq2})), one obtains that
$\nu(r)\le r\le 2^{-j_3}$, where
\begin{equation}
\label{thm:main:eq15}
\nu(r):=\frac{r}{6n_0(-\log_2 r)S(-\log_2 r)}\,.
\end{equation}
Thus, one can take in (\ref{thm:main:eq14}) $\rho=\nu (r)$, and by this way one obtains, for all $r\in (0, 2^{-j_3}]\subseteq (0,1/2]$, that
\begin{equation}
\label{thm:main:eq15bis}
\big(\nu(r)\big)^{-N(H-1)-1}\,\os\Bigg (X^{N,H}(\o)\,,\tau\,, \bigg(\frac{\big(-\log_2 \nu(r) \big)S\big(-\log_2 \nu(r)\big)}{(-\log_2 r)S(-\log_2 r)} \bigg)r\Bigg)\ge \eta\,.
\end{equation}
Let us now show that 
\begin{equation}
\label{thm:main:eq16}
\th_0:=\sup_{r\in (0,1/2]} \frac{\big(-\log_2 \nu(r) \big)S\big(-\log_2 \nu(r)\big)}{(-\log_2 r)S(-\log_2 r)}<+\infty\,.
\end{equation}
Let us set $z=z(r):=-\log_2 r$, where $r\in (0,1/2]$ is arbitrary. Using (\ref{thm:main:eq15}), the inequality $z\ge 1$, the second equality in (\ref{thm:main:eq2}), and the fact that $S$ is an increasing function, one gets that
\begin{equation}
\label{thm:main:eq17}
S\big(-\log_2 \nu(r)\big)=S\big (z+\log_2 (6n_0)+\log_2 (z)+\log_2 (S(z))\big)\le S\big (z+\a_0\log_2 (2+z)\big)\,,
\end{equation}
where $\a_0$ is a positive finite constant only depending on $N$ and $S$. Thus, it follows from (\ref{thm:main:eq17}) and (\ref{thm:main:eq2bis}) that 
\begin{equation}
\label{thm:main:eq18}
\sup_{r\in (0,1/2]} \frac{S\big(-\log_2 \nu(r)\big)}{S(-\log_2 r)}\le \sup_{z\in [1,+\infty)}\frac{S\big(z+\a_0 \log_2 (2+z)\big)}{S(z)}<+\infty\,.
\end{equation}
Similarly to (\ref{thm:main:eq17}) it can be shown that $\big(-\log_2 \nu(r)\big)\le z+\a_0\log_2 (2+z)$, for all $r\in (0,1/2]$; thus, one gets that
\begin{equation}
\label{thm:main:eq19}
\sup_{r\in (0,1/2]} \frac{\big(-\log_2 \nu(r)\big)}{(-\log_2 r)}\le \sup_{z\in [1,+\infty)}\frac{z+\a_0 \log_2 (2+z)}{z}<+\infty\,.
\end{equation}
Then (\ref{thm:main:eq16}) results from (\ref{thm:main:eq18}) and (\ref{thm:main:eq19}). Next, combining 
(\ref{thm:main:eq15bis}) and (\ref{thm:main:eq16}) with (\ref{eq:def-os}) and the fact that $\eta\in \big (0, 2^{-1}\,\wt{c}\big)$ is arbitrary, it follows that, for all $\tau\in (0,1)$ and $\o\in\O^*$,
\[
\liminf_{r\rightarrow 0}\bigg\{\big(\nu(r)\big)^{-N(H-1)-1}\,\os\Big(X^{N,H}(\o)\,,\tau\,, \th_0 r\Big)\bigg\}\ge
2^{-1}\,\wt{c} 
\]
and consequently that
\begin{equation}
\label{thm:main:eq20}
\liminf_{r\rightarrow 0}\bigg\{\Big(\nu\big(\th_0^{-1}\,r\big)\Big)^{-N(H-1)-1}\,\os\Big(X^{N,H}(\o)\,,\tau\,,  r\Big)\bigg\}\ge 2^{-1}\,\wt{c}\,.
\end{equation}
Finally, in view of (\ref{thm:main:eq15}) and (\ref{thm:main:eq20}), in order to derive (\ref{thm:main:eq1}) it is enough to show that 
\begin{equation}
\label{thm:main:eq21}
\limsup_{r\rightarrow 0} \frac{\nu(r)}{\nu\big(\th_0^{-1}\,r\big)}<+\infty\,.
\end{equation}
One knows from (\ref{thm:main:eq15}) that, for each $r>0$ small enough, one has 
\begin{eqnarray*}
\frac{\nu(r)}{\nu\big(\th_0^{-1}\,r\big)}&=&\frac{\th_0\big(-\log_2 (\th_0^{-1}\, r )\big)S\big (-\log_2 (\th_0^{-1}\, r )\big)}{(-\log_2 r)S(-\log_2 r)}\\
&\le&\th_0\big (1+\log_2(\th_0)\big)\times\frac{S\big (-\log_2 (r)+\log_2(\th_0)\big)}{S(-\log_2 r)}\,.
\end{eqnarray*}
Thus, using (\ref{thm:main:eq2bis}) one obtains (\ref{thm:main:eq21}).
\qed

\section*{Acknowledgements}
This work has been partially supported by the Labex CEMPI (ANR-11-LABX-0007-01) and the GDR 3475 (Analyse Multifractale). 

\bibliographystyle{plain}
\bibliography{biblio-Herm}

\addcontentsline{toc}{section}{Réfèrences}
\end{document}